\documentclass[english]{article}
\usepackage{amssymb,amscd,amsthm,amsmath,babel,amsfonts,young,wrapfig}
\usepackage{pict2e,cancel,pst-node,pst-text,pst-3d,pstricks}

\usepackage{graphicx}
\textheight 22cm 

\theoremstyle{plain}
\newtheorem{theorem}{Theorem}[section]

\newtheorem{corollary}[theorem]{Corollary}
\newtheorem{proposition}[theorem]{Proposition}

\theoremstyle{definition}
\newtheorem{notation}[theorem]{Notation}
\newtheorem{definition}[theorem]{Definition}
\newtheorem{remark}[theorem]{Remark}
\newtheorem{remarks}[theorem]{Remarks}

\newtheorem*{example*}{Example}
\newtheorem*{definition*}{Definition}

\numberwithin{equation}{section}

\newcommand\opn[2]{%
  \newcommand{#1}{\operatorname{#2}}}


\newcommand\NN{{\mathbb N}}
\newcommand\ZZ{{\mathbb Z}}





\opn\charac{char}
\opn\projdim{proj\,dim}
\opn\depth{depth}
\opn\rank{rank}
\opn\rlex{rlex}
\opn\LEX{Lex}
\opn\fine{end}
\opn\lcm{lcm}
\opn\modnuo{mod}
\opn\supp{supp}
\opn\Ass{Ass}
\opn\Proj{Proj}
\opn\Spec{Spec}
\opn\Soc{Soc}
\opn\reg{reg}
\opn\Md{Md}
\opn\Borel{Borel}
\opn\Shad{Shad}
\opn\Span{span}
\opn\ini{in}
\opn\Gin{Gin}
\opn\grade{grade}
\opn\length{length}
\opn\width{width}
\opn\greg{greg}
\opn\GL{GL}

%
\opn\Ker{Ker}
\opn\coker{coker}
 
\opn\im{Im}
\opn\Hom{Hom}
\opn\Tor{Tor}
\opn\Ext{Ext}
\opn\id{id}
\opn\Id{Id}
\opn\Homst{^*Hom}
\opn\Extst{^*Ext}
\opn\Gamst{^*\Gamma}
\opn\Hst{^*H}

\opn\KRS{KRS}
\opn\BKRS{BKRS}
\opn\ASL{ASL}
\opn\hgt{ht}

\opn\lk{lk}
\opn\st{st}



\let\:=\colon
\let\phi=\varphi

\newcommand\pnt{{\raise0.5mm\hbox{\large\bf.}}}

\newcommand\lra{\longrightarrow}
\newcommand\ra{\rightarrow}

\def\cocoa{\mbox{\rm
 C\kern-.13em o\kern-.07 em C\kern-.13em o\kern-.15em A}}
%
\opn\HF{H}
\opn\HS{HS}

\opn\po{pol}
\opn\pol{^{\bf p}} 

\newcommand{\Y}{\mathcal{Y}}
\newcommand{\X}{\mathcal{X}}
\newcommand{\Z}{\mathcal{Z}}

\newcommand\FF{{\mathbb F}}


\begin{document}
\noindent  
{\Large Invariants of ideals generated by pfaffians}

\vspace{1cm}
\noindent
{\bf Emanuela De Negri}. {\small Universit\`a di Genova, Dipartimento
  di Matematica, Via Dodecaneso 35, IT-16146 Genova, Italia. {\it
    email}: denegri@dima.unige.it}\\ 
\noindent
{\bf Elisa Gorla}. {\small  Universit\"at Basel, Departement Mathematik,
Rheinsprung 21, CH-4051 Basel, Switzerland.  {\it email}:
elisa.gorla@unibas.ch}

\vspace{.7cm}

\noindent
{\small {\bf Abstract}. Ideals generated by pfaffians are of interest
  in commutative algebra and algebraic geometry, as well as in
  combinatorics. In this article we compute multiplicity and
  Castelnuovo-Mumford regularity of pfaffian ideals of ladders. We
  give explicit formulas for some families of ideals, and indicate a
  procedure that allows to recursively compute the invariants of any
  pfaffian ideal of ladder. Our approach makes an essential use of
  liaison theory.
}

\vspace{1cm}
\noindent

\section*{Introduction}

Pfaffians are the natural analogue of minors when working with
skew-symmetric matrices.
Ideals generated by pfaffians are studied in the context of
commutative algebra and algebraic geometry, as well as in
combinatorics. Many are the reasons for such an interest, e.g., many
ideals generated by pfaffians are Gorenstein (see, e.g.,~\cite{KL} 
and~\cite{D}). Conversely, due to a famous result (\cite{BE}) of
Buchsbaum and Eisenbud, any Gorenstein ideal of height $3$ of a
polynomial ring over a field is generated by the maximal pfaffians of 
a suitable skew-symmetric matrix of homogeneous forms. Ideals
generated by pfaffians arise
naturally in algebraic geometry as, e.g., ideals of
pfaffians in a generic skew-symmetric matrix define Schubert cells in
orthogonal Grassmannians. Moreover, some Grassmannians are defined by
pfaffians, as well as some of their secant varieties.

In this article, we compute numerical invariants of pfaffian ideals of
ladders. Pfaffian ideals of ladders are, informally speaking, ideals
generated by pfaffians which only involve indeterminates in a ladder
of a skew-symmetric matrix of indeterminates. The size of the
pfaffians is allowed to vary in different regions of the ladder. This
family was introduced by the authors in~\cite{DGo}, and contains the
classically studied ideals of $2t$-pfaffians of a matrix or of a
ladder. It is a very large family, and a natural one to study from the
point of view of liaison theory, since all the ideals in this family
arise from ideals of $2t$-pfaffians in a ladder while performing
elementary G-biliaisons. In~\cite{DGo} we proved that these
ideals are prime, 
normal and Cohen-Macaulay. The main result of the paper was a proof
that any pfaffian ideal of ladder can be obtained from an ideal
generated by indeterminates via a finite sequence of ascending
G-biliaisons. In particular they are glicci, i.e., they belong to the
G-liaison class of a complete intersection. The G-biliaison steps were
described very explicitly. Therefore, as a biproduct, it is possible
to recursively compute numerical invariants of pfaffian ideals of
ladders such as the multiplicity, the Hilbert function, the
$h$-vector, as well as a graded free resolution. In some cases it is also
possible to compute the graded Betti numbers and in particular the
Castelnuovo Mumford regularity. Although it is possible to perform these
computations in any specific example, it is in general hard to produce
explicit formulas. In this paper, we derive explicit formulas for some
classes of pfaffian ideals of ladders.

The paper is organized as follows. In Section~1 we fix the notation
and define the classes that we study. We also recall the main result
of~\cite{DGo} on which our approach is based. In Section~2
we give explicit or recursive formulas for the multiplicity of the
ideals that we study. In
Theorem~\ref{prod} we give a simple numerical condition which forces
the multiplicity of a pfaffian ideal of ladder to decompose as the
product of the multiplicities of two pfaffian ideals relative to
subladders. In Section~3 we compute 
Castelnuovo-Mumford regularities. In Section~4 we show how to use our
approach to compute the graded Betti numbers of ideals of pfaffians of
maximal size of a generic skew-symmetric matrix. We also give a simple
proof that the $h$-vectors of these ideals are of decreasing
type. The ideals generated by pfaffians of maximal size of a generic
skew-symmetric matrix are Gorenstein ideals of height $3$, so the
results are well-known. However, we are able to give a very simple
proof, which can be easily specialized to any Gorenstein ideal of
height $3$.  

\vspace{.5cm}

{\small 
{\bf Acknowledgements}. The second author was supported by the
  Swiss National Science Foundation under grant no. 123393. 
  Part of this work was done while the authors were attending the
  conference ``PASI 2009 in Commutative Algebra and its Connections to
  Geometry, honoring Wolmer Vasconcelos'', which took place in Olinda
  (Brazil) in August 2009. The authors wish to thank the organizers,
  the speakers and the participants to the conference for the
  stimulating working environment that they created.
}

\section{Some classes of pfaffian ladder ideals}

Let $X=(x_{ij})$ be an $n\times n$ skew-symmetric matrix of indeterminates.
In other words, the entries
$x_{ij}$ with $i<j$ are indeterminates, $x_{ij}=-x_{ji}$ for
$i>j$, and $x_{ii}=0$ for all $i=1,...,n$.
Let $R=K[X]=K[x_{ij} \;|\; 1\leq i<j\leq n ]$ be the polynomial ring associated 
to $X$. 

\begin{definition}\label{ladd}
A {\em ladder} $\mathcal Y$ of $X$ is a subset of the set
 $\{(i,j)\in\NN^2 \;|\; 1\le i,j\le n\}$
with the following properties :
\begin{enumerate}
\item if $(i,j)\in {\mathcal Y}$ then $(j,i)\in {\mathcal Y}$,
\item if  $i<h,j>k$ and $(i,j),(h,k)$ belong to $\mathcal Y$, then 
$(i,k),(i,h),(h,j),(j,k)$ belong to $\mathcal Y$.
\end{enumerate}
\end{definition}

We do not assume that a ladder $\Y$ is connected, nor that $X$ is the 
smallest skew-symmetric matrix having $\Y$ as ladder. We can assume
without loss of generality that the ladder $\Y$ is symmetric.

It is easy to see that any ladder can be decomposed as a union of
square subladders 
\begin{equation}\label{decomp}
\Y=\X_1\cup\ldots\cup \X_s
\end{equation} 
where $$\X_k=\{(i,j)\;|\; a_k\le i,j \le b_k\},$$ for some  integers
$1\leq a_1\leq\ldots\leq a_s\leq n$ and $1\leq b_1\leq\ldots\leq
b_s\leq n$ such that $a_k<b_k$ for all $k$.
We say that $\Y$ is the ladder with {\em upper corners}
$(a_1,b_1),\ldots,(a_s,b_s)$, and that $\X_k$ is the square 
subladder of $\Y$ with upper outside corner $(a_k,b_k)$.
We allow two upper corners to have the same first or second
coordinate, but we assume that no two upper corners coincide.
Notice that with this convention a ladder does not have a unique
decomposition of the form (\ref{decomp}). In other words, a ladder
does not correspond uniquely to a set of upper corners
$(a_1,b_1),\ldots,(a_s,b_s)$. However, the upper corners determine the
subladders $\X_k$, hence the ladder $\Y$ according to (\ref{decomp}). 

Let $t$ be a positive integer. A $2t$-pfaffian is the pfaffian of a
$2t\times 2t$ submatrix of $X$.
Given a ladder $\mathcal Y$ we set $Y=\{x_{ij}\in X\;|\; (i,j)\in
{\mathcal Y},\; i<j\}$.
We let $I_{2t}(Y)$ denote the ideal generated by the set of the
$2t$-pfaffians of $X$ which involve only indeterminates of $Y$. In
particular $I_{2t}(X)$ is the ideal generated by the $2t$-pfaffians of
$X$. We regard all the ideals as ideals in $K[X]$.

Whenever we consider a ladder $\Y$, we assume that it comes with its
set of upper corners and the corresponding decomposition as a
union of square subladders as in (\ref{decomp}). 

The following family of ideals has been introduced and studied
in~\cite{DGo}:

\begin{definition}\label{ideal}
Let $\Y=\X_1\cup\ldots\cup \X_s$ be a ladder as in
Definition~\ref{ladd}.\newline
Let $X_k=\{x_{ij}\;|\; (i,j)\in\X_k,\; i<j\}$ for $k=1,\dots,s$.
Fix a vector ${\bf t}=(t_1,\ldots,t_s)$, ${\bf t}\in
\{1,\ldots,\lfloor\frac{n}{2}\rfloor\}^s$. 
The {\em pfaffian ideal} $I_{2{\bf t}}(Y)$ is by definition the sum of 
pfaffian ideals $I_{2t_1}(X_1)+\ldots+I_{2t_s}(X_s)\subseteq
K[X]$. We refer to these ideals as {\em pfaffian ideals of ladders}.  
\end{definition}

\begin{remarks}[Remarks~1.5,~\cite{DGo}]
We can assume without loss of generality that $$2t_k\leq
b_k-a_k+1,\;\;\;\mbox{for}\; 1\leq k\leq s.$$
Moreover, we can assume
that $$a_k-a_{k-1}>t_{k-1}-t_k \;\;\;\mbox{and}\;\;\;
b_k-b_{k-1}>t_k-t_{k-1}$$ for $2\leq k\leq s$.
\end{remarks}

In~\cite{DGo}, pfaffian ideals of ladders are proved to be prime,
normal, and Cohen-Macaulay. A formula for their height is given. 

\begin{notation}\label{ladderheight}
For a ladder $\Y$ with upper corners $(a_1,b_1),\ldots,(a_s,b_s)$ and
${\bf t}=(t_1,\ldots,t_s)$, we denote by $\tilde{\Y}$ the ladder with
upper corners $(a_1+t_1-1,b_1-t_1+1),\ldots,(a_s+t_s-1,b_s-t_s+1)$.
\end{notation}

The ladder $\tilde{\Y}$ computes the height of the ideal $I_{2{\bf t}}(Y)$
as follows:

\begin{proposition}[Proposition~1.10, \cite{DGo}]
Let $\Y$ be the ladder with upper corners $(a_1,b_1), \ldots,$
$(a_s,b_s)$ and ${\bf t}=(t_1,\ldots,t_s)$. Let $\tilde{\Y}$ be as in
Notation~\ref{ladderheight}. Then the height of $I_{2{\bf t}}(Y)$
equals the cardinality of $\{(i,j)\in\tilde{\Y} \;|\; i<j\}$. 
\end{proposition}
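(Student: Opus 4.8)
The plan is to prove the height formula by induction on $\sum_{k=1}^{s}t_k$, using a localization argument that at each step lowers one of the $t_k$ by one while preserving both the height of the ideal and the combinatorial quantity $|\{(i,j)\in\tilde{\Y}:i<j\}|$. Throughout I use that $I_{2\mathbf{t}}(Y)$ is prime (so its height is the codimension of its zero locus) and the classical fact that $\hgt I_{2t}(X)=\binom{n-2t+2}{2}$ for a single square, which serves as both the motivating special case and an independent numerical check.

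The base case is $t_1=\cdots=t_s=1$. Here each $2$-pfaffian is just an entry $x_{ij}$, so $I_{2\mathbf{t}}(Y)=I_2(Y)$ is the ideal generated by the indeterminates $\{x_{ij}:(i,j)\in\Y,\ i<j\}$, and its height is exactly the number of these variables. On the other hand, when all $t_k=1$ the upper corners of $\tilde{\Y}$ coincide with those of $\Y$, so $\tilde{\Y}=\Y$ and the asserted cardinality is the same number; thus the base case holds trivially.

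For the inductive step, suppose some $t_k\ge 2$ and choose a corner entry $x_{pq}$ of a subladder $\X_k$ with $t_k\ge 2$ that does not lie in $I_{2\mathbf{t}}(Y)$ (such an entry exists; at an outermost such square one can take $(p,q)=(a_k,b_k)$). Since the ideal is prime and $x_{pq}\notin I_{2\mathbf{t}}(Y)$, localization at a single element preserves height, so $\hgt I_{2\mathbf{t}}(Y)=\hgt I_{2\mathbf{t}}(Y)R_{x_{pq}}$. Inverting $x_{pq}$ and performing the symmetric elementary row and column operations that clear rows and columns $p,q$ yields an automorphism of $R_{x_{pq}}$ identifying it with $S[\text{one invertible and several free variables}]$, where $S=K[X']$ is the coordinate ring of the skew-symmetric matrix $X'$ obtained by deleting rows and columns $p,q$. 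Under this identification the $2t_k$-pfaffians supported on $\X_k$ become, up to the unit $x_{pq}$, the $2(t_k-1)$-pfaffians of the shrunken square, while the remaining pfaffian generators are carried to the corresponding generators over $X'$. Hence $I_{2\mathbf{t}}(Y)R_{x_{pq}}$ is identified with $J\cdot S[\ldots]$, where $J=I_{2\mathbf{t}'}(Y')$ is the pfaffian ideal of the smaller ladder $\Y'$ with $\mathbf{t}'$ obtained from $\mathbf{t}$ by replacing $t_k$ with $t_k-1$. Since height is unchanged under polynomial extension, $\hgt I_{2\mathbf{t}}(Y)=\hgt J$.

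It remains to check that the count is preserved, i.e. $|\{(i,j)\in\tilde{\Y}:i<j\}|=|\{(i,j)\in\tilde{\Y'}:i<j\}|$, and this is precisely what the definition of $\tilde{\Y}$ is engineered to guarantee: deleting rows and columns $a_k,b_k$ moves the $k$-th square to corner $(a_k+1,b_k-1)$ while lowering $t_k$ to $t_k-1$, so its contribution to $\tilde{\Y'}$ has corner $((a_k+1)+(t_k-1)-1,\ (b_k-1)-(t_k-1)+1)=(a_k+t_k-1,\ b_k-t_k+1)$, exactly its old contribution to $\tilde{\Y}$. Invoking the induction hypothesis for $J$ then gives $\hgt I_{2\mathbf{t}}(Y)=\hgt J=|\{(i,j)\in\tilde{\Y'}:i<j\}|=|\{(i,j)\in\tilde{\Y}:i<j\}|$. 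The main obstacle is exactly this inductive step for a ladder whose subladders overlap: deleting rows and columns $p,q$ also truncates the neighbouring squares $\X_{k\pm 1}$, so one must verify that the localized ideal is again a genuine pfaffian ideal of a ladder, still satisfying the normalizing inequalities of Remarks~1.5, and that the corner bookkeeping above goes through simultaneously for all subladders after reindexing. Choosing the corner at an outermost square is what keeps this reduction clean; alternatively, one could run the same induction through the explicit elementary G-biliaisons of~\cite{DGo}, since those preserve codimension and terminate at the ideal of indeterminates of $\tilde{\Y}$.
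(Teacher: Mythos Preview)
This proposition is not proved in the present paper; it is quoted from~\cite{DGo} as background. There is thus no proof here against which to compare your attempt.

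On your sketch itself: the localize-at-a-corner induction is the classical route to such height formulas, and the base case and the single-square step are fine. The gap is exactly where you place it, and it is a real one. When the subladders overlap, inverting $x_{a_k,b_k}$ and performing the symmetric row and column operations that clear indices $a_k,b_k$ does \emph{not} obviously carry $I_{2\mathbf{t}}(Y)$ to a pfaffian ideal of a smaller ladder: those operations modify every entry $x_{ij}$ with $i,j\notin\{a_k,b_k\}$ by terms involving $x_{i,b_k}$ and $x_{a_k,j}$, so a $2t_j$-pfaffian of a neighbouring $\X_j$ in the old variables is no longer a pfaffian in the new ones, and one must argue that the \emph{ideal} they generate nevertheless coincides with $I_{2\mathbf{t}'}(Y')$ for some ladder $\Y'$. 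Your corner bookkeeping likewise tracks only the $k$-th square; for $j\ne k$ the corner shifts under reindexing while $t_j$ is fixed, and you have not shown that the union $\tilde{\Y'}$ still has the required cardinality. The alternative you mention in your last sentence is actually a complete and much cleaner argument: each elementary G-biliaison of Theorem~\ref{step} preserves height by Definition~\ref{gbil}, the passage from $(\Y,\mathbf{t})$ to $(\Y',\mathbf{t}')$ described there leaves $\tilde{\Y}$ literally unchanged (by the very computation you wrote for the $k$-th corner, now applied with the other corners held fixed), and iterating until every $t_k=1$ terminates at the ideal of indeterminates supported on $\tilde{\Y}$, whose height is the asserted cardinality.
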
 

We now recall the definition of biliaison.

\begin{definition}\label{gbil}
Let $I,I',J$ be homogeneous, saturated ideals in $K[X]$, with
$\hgt(I)=\hgt(I')=\hgt(J)+1.$ Assume that $R/J$ is Cohen-Macaulay and
generically Gorenstein, i.e., $(R/J)_P$ is Gorenstein for any minimal
associated prime $P$ of $J$. We say that $I$ is obtained from $I'$ by
a {\em G-biliaison of height $\ell$} on $J$ if $I/J$ and $I'/J(\ell)$
represent the same element in the ideal class group of $K[X]/J$. 
\end{definition}

In other words, $I$ is obtained from $I'$ by a G-biliaison of height
$\ell$ on $J$ if there exist homogeneous polynomials $f,g\in R$ with
$\deg(g)=\deg(f)+\ell$, such that $fI+J=gI'+J$ as ideals of $R$.

The main result of~\cite{DGo} is that ladder pfaffian ideals belong to the
G-biliaison class of a complete intersection. In particular, they are glicci. 
We briefly recall the single G-biliaison step which is described in the
proof of~\cite[Theorem 2.3]{DGo}. With the notation of
Definition~\ref{ideal}, let $\mathcal Y'$ be the subladder of
$\mathcal Y$ with upper corners
$$(a_1,b_1),\ldots, (a_{k-1},b_{k-1}), (a_k+1,b_k-1),
(a_{k+1},b_{k+1}),\ldots, (a_s,b_s),$$
and let ${\bf t'}=(t_1,\ldots,t_{k-1},t_k-1,t_{k+1},\ldots,t_s)$. Let
$\Z$ be the subladder of $\Y$ obtained by removing the entry
$(a_k,b_k)$ and its symmetric. Equivalently, $\Z$ is the ladder with
upper corners $$(a_1,b_1),\ldots, (a_{k-1},b_{k-1}), (a_k,b_k-1),
(a_k+1,b_k), (a_{k+1},b_{k+1}),\ldots, (a_s,b_s).$$ 
Let ${\bf u}=(t_1,\ldots,t_{k-1},t_k,t_k,t_{k+1},\ldots,t_s)$. One
has:

\begin{theorem}[Theorem~2.3, \cite{DGo}]\label{step}
Let $I=I_{2{\bf t}}(Y)$, $I'=I_{2{\bf t'}}(Y')$ and $J=I_{2{\bf
    u}}(Z)$ be ideals of $K[X]$. Then $I$ is obtained from $I'$ via an
elementary G-biliaison of height $1$ on $J$. 
\end{theorem}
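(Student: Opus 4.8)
The plan is to verify directly the three requirements packaged in Definition~\ref{gbil} and then to produce the explicit polynomials $f,g$ promised by the reformulation stated immediately after that definition, for the value $\ell=1$. First I would dispose of the hypotheses on $J$. Since $\Z$ is again a ladder and $\mathbf u$ a legitimate exponent vector, $J=I_{2\mathbf u}(Z)$ is itself a pfaffian ideal of ladder; hence by~\cite{DGo} it is prime and $K[X]/J$ is Cohen--Macaulay. Primality makes $K[X]/J$ a domain, so its only minimal prime is the zero ideal and $K[X]/J$ is automatically generically Gorenstein. This settles the standing assumptions of Definition~\ref{gbil}.

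Next I would compute the heights by means of the height formula recalled above (Proposition~1.10 of~\cite{DGo}) applied to the associated ladders $\tilde\Y,\tilde{\Y}',\tilde{\Z}$ of Notation~\ref{ladderheight}. The $k$-th upper corner of $\tilde{\Y}'$ is $((a_k+1)+(t_k-1)-1,(b_k-1)-(t_k-1)+1)=(a_k+t_k-1,b_k-t_k+1)$, which coincides with the $k$-th corner of $\tilde\Y$; all other corners agree, so $\tilde{\Y}'=\tilde\Y$ and therefore $\hgt I=\hgt I'$. For $J$ the single corner $(a_k+t_k-1,b_k-t_k+1)$ of $\tilde\Y$ is replaced by the two corners $(a_k+t_k-1,b_k-t_k)$ and $(a_k+t_k,b_k-t_k+1)$; the union of the two corresponding squares equals the original square with the single entry $(a_k+t_k-1,b_k-t_k+1)$ and its symmetric removed. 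Counting entries with $i<j$ thus gives $|\{(i,j)\in\tilde{\Z}:i<j\}|=|\{(i,j)\in\tilde\Y:i<j\}|-1$, so $\hgt I=\hgt J+1$. The assumption $2t_k\le b_k-a_k+1$ guarantees $a_k+t_k-1<b_k-t_k+1$, hence exactly one entry is removed, and the count stays valid in the boundary case $2t_k=b_k-a_k+1$.

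The core of the argument is the ideal identity $fI+J=gI'+J$. I would take $g$ to be the $2t_k$-pfaffian of $X$ on the index set $\{a_k,a_k+1,\dots,a_k+2t_k-2,b_k\}$, which involves both corner indices $a_k$ and $b_k$, and $f$ to be the complementary $2(t_k-1)$-pfaffian on $\{a_k+1,\dots,a_k+2t_k-2\}$; the inequality $2t_k\le b_k-a_k+1$ places the indices of $f$ inside the subladder $\X_k'$, and $\deg g=t_k=\deg f+1$ as required. Since $I,I',J$ share all components coming from the subladders $\X_j$ with $j\ne k$, and these lie in $J$, both inclusions reduce to statements about the corner block on indices $a_k,\dots,b_k$, where $J$ contributes the two ideals of $2t_k$-pfaffians of the sub-blocks omitting $b_k$, respectively $a_k$. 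For $gI'+J\subseteq fI+J$ I would multiply a generator $[T]$ of $I'$ (a $2(t_k-1)$-pfaffian with $T\subseteq\{a_k+1,\dots,b_k-1\}$) by $g$ and apply a pfaffian Pl\"ucker (exchange) relation of bidegree $(2t_k,2t_k-2)$ that transfers the pair $\{a_k,b_k\}$ from $g$ onto $T$: one term of the relation is $f\cdot[T\cup\{a_k,b_k\}]$, which lies in $f\,I_{2t_k}(X_k)\subseteq fI$, while every remaining term has a factor that is a pfaffian confined to a sub-block missing $a_k$ or missing $b_k$, hence a generator of $J$. The reverse inclusion is handled symmetrically, expanding $f\cdot[S]$ for a generator $[S]$ of $I$.

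I expect the pfaffian relation step to be the main obstacle. One must select the correct exchange relation among pfaffians of mixed sizes $2t_k$ and $2t_k-2$, control the signs, and verify that after transferring $\{a_k,b_k\}$ every error term is indeed a pfaffian supported on $\{a_k,\dots,b_k-1\}$ or on $\{a_k+1,\dots,b_k\}$ of size at least $2t_k$, so that it belongs to $J$. By contrast, the height bookkeeping and the verification of the hypotheses on $J$ are routine once the primality and Cohen--Macaulayness proved in~\cite{DGo} are invoked.
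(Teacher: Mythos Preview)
The paper does not contain a proof of this theorem: it is quoted verbatim from~\cite{DGo} as background, and the only additional content the paper offers is the sentence following the statement, specifying that $f\in I'$ is a $2(t_k-1)$-pfaffian, $g\in I$ is a $2t_k$-pfaffian, and $f,g\notin J$. So there is nothing in this paper to compare your argument against.

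That said, your sketch is a faithful outline of the argument one expects (and which is indeed carried out in~\cite{DGo}): verify the standing hypotheses on $J$ via primality and Cohen--Macaulayness, check the height condition via the ladder $\tilde\Y$, and establish $fI+J=gI'+J$ by a pfaffian exchange relation. Your height bookkeeping is correct, and your specific choices of $f$ and $g$ are consistent with the description the paper gives. The one point you would need to tighten in a full proof is exactly the one you flag: writing down the precise Pl\"ucker-type identity for pfaffians of sizes $2t_k$ and $2t_k-2$ and checking term by term that every residual summand lands in $I_{2t_k}(\{a_k,\dots,b_k-1\})+I_{2t_k}(\{a_k+1,\dots,b_k\})\subseteq J$. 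You should also note explicitly that $f,g\notin J$ (equivalently, are non-zerodivisors on $K[X]/J$ by primality), since this is what makes the equality $fI+J=gI'+J$ encode an isomorphism $I/J\cong (I'/J)(-1)$ rather than a triviality.
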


More precisely, with the above notation we have
$$fI+J=gI'+J$$ where $f\in I'$ is a $2(t_k-1)$-pfaffian, $g\in I$ is a
$2t_k$-pfaffian, and $f,g\not\in J$.

When discussing biliaison, we will refer without distinction to the
ideals and to the varieties associated to them. 

In this paper we deal with special classes of pfaffian ideals of
ladders, and we compute some of their numerical invariants using the
biliaison step described in Theorem~\ref{step}. The same technique
gives a recursive procedure to determine such invariants for any
pfaffian ideal of ladder. However, it is in general hard to deduce
explicit formulas.

\medskip

We now introduce the classes we are going to study. First we consider
the ideal $L_t^n=I_{2{\bf t}}(Y)$ where $\Y$ is the ladder with upper
corners $(1,n-1)$ an $(2,n)$ and ${\bf t}=(t,t)$. Clearly $L_t^n$ is
generated by the $2t$-pfaffians of the ladder obtained from $X$ by
deleting the entries $(1,n)$ and $(n,1)$. 

\begin{figure}[htbp]
\centering
\psset{xunit=.3cm, yunit=.3cm}
\begin{pspicture}(-2,1)(14,14)
\psline[linestyle=solid](1,12)(11,12)\rput(11,12){$\bullet$}\rput(12.5,13){$(1,n-1)$}
\psline[linestyle=solid](11,12)(11,11)
\psline[linestyle=solid](11,11)(12,11)\rput(12,11){$\bullet$}\rput(13.5,11.5){$(2,n)$}
\psline[linestyle=solid](12,11)(12,1)
\psline[linestyle=solid](1,12)(1,2)
\psline[linestyle=solid](1,2)(2,2)
\psline[linestyle=solid](2,2)(2,1)
\psline[linestyle=solid](2,1)(12,1)
\rput(6,7){ $2t$-pfaffians}
\rput(-2,6){\Large{$L_t^n$}\,:}
\end{pspicture}
\end{figure}

Then we restrict our attention to some ideals generated by pfaffians
whose size is maximal or submaximal, in a sense that we are going to
specify. 
In particular, we consider the ideals generated by maximal and by
submaximal pfaffians of a skew-symmetric matrix of
indeterminates. More precisely, we denote by $M_t$ the ideal generated 
by the $2t$-Pfaffians of a $(2t+1)\times(2t+1)$ matrix and by $SM_t$
the ideal generated by the $2t$-pfaffians of a $(2t+2)\times(2t+2)$
matrix. 

Moreover we consider ideals generated by pfaffians of two different
sizes in different regions of a matrix. Here we regard  {\it nested}
matrices as a ladder. In particular, we consider $N_t=I_{2{\bf t}}(Y)$
where $\Y$ is the ladder with upper corners $(1,2t-1)$ and $(1,2t+1)$,
and ${\bf t}=(t-1,t)$. So $N_t$ is the ideal generated by the
$2t$-pfaffians of a skew-symmetric matrix of size $2t+1$ and the
$(2t-2)$-pfaffians of its first $2t-1$ rows and columns. We denote by
$SN_t$ the ideal $I_{2{\bf t}}(Y)$ where $\Y$ is the ladder with upper
corners $(1,2t-1)$ and $(1,2t+2)$, and ${\bf t}=(t-1,t)$. This is the
ideal generated by the $2t$-pfaffians of a skew-symmetric matrix of
size $2t+2$ and the $(2t-2)$-pfaffians of its first $2t-1$ rows and
columns. 

\begin{figure}[htbp]
\psset{xunit=.3cm, yunit=.3cm}
\begin{pspicture}(-2,-1)(14,14)
\psline[linestyle=solid](1,12)(12,12)
\rput(10,12){$\bullet$}\rput(10,12.8){\small{$(1,2t-1)$}}
\rput(12,12){$\bullet$}\rput(15,12.5){\small{$(1,2t+1)$}}
\psline[linestyle=solid](12,12)(12,1)
\psline[linestyle=solid](1,12)(1,1)
\psline[linestyle=solid](1,1)(12,1)
\psline[linestyle=dashed](10,12)(10,3)
\psline[linestyle=dashed](10,3)(1,3)
\rput(6,7){$(2t-2)$-pfaff.}
\rput(9,2){ $2t$-pfaff.}
\rput(-2,6){\Large{$N_t$}\,:}
\end{pspicture}
\hskip1.5truecm
\begin{pspicture}(-1,-0)(13,13)
\psline[linestyle=solid](1,13)(13,13)
\rput(10,13){$\bullet$}\rput(10,13.8){\small{$(1,2t-1)$}}
\rput(13,13){$\bullet$}\rput(16,13.5){\small{$(1,2t+2)$}}
\psline[linestyle=solid](13,13)(13,1)
\psline[linestyle=solid](1,13)(1,1)
\psline[linestyle=solid](1,1)(13,1)
\psline[linestyle=dashed](10,13)(10,4)
\psline[linestyle=dashed](10,4)(1,4)
\rput(6,8){$(2t-2)$-pfaff.}
\rput(9,2){ $2t$-pfaff.}
\rput(-2,6){\Large{$SN_t$}\,:}
\end{pspicture}
\end{figure}

We let $L_t(k)=I_{2{\bf t}}(Y)$, where $\Y$ is the ladder with upper
corners $(1,2t+1),(2,2t+2),(3,2t+3),\ldots,(k,2t+k)$, and ${\bf
  t}=(t,\dots, t)$. Notice that $L_t(1)=M_t$, and $L_t(2)=L_t^{2t+2}.$ 

\begin{figure}[htbp]
\psset{xunit=.3cm, yunit=.3cm}
\begin{pspicture}(-2,-1)(14,14)
\psline[linestyle=solid](1,12)(11,12)\rput(11,12){$\bullet$}\rput(13.5,13){$(1,2t+1)$}
\psline[linestyle=solid](11,12)(11,11)
\psline[linestyle=solid](11,11)(12,11)\rput(12,11){$\bullet$}\rput(14.8,11.5){$(2,2t+2)$}
\psline[linestyle=solid](12,11)(12,1)
\psline[linestyle=solid](1,12)(1,2)
\psline[linestyle=solid](1,2)(2,2)
\psline[linestyle=solid](2,2)(2,1)
\psline[linestyle=solid](2,1)(12,1)
\rput(6,7){$2t$-pfaffians}
\rput(-2,6){\large{$L_t(2)$}\,:}
\end{pspicture}
\hskip1truecm
\begin{pspicture}(-2,-1)(14,14)
  \psline[linestyle=solid](1,12)(8,12)\rput(8,12){$\bullet$}\rput(10.5,13){$(1,2t+1)$}
  \psline[linestyle=solid](8,12)(8,11)
  \psline[linestyle=solid](8,11)(9,11)\rput(9,11){$\bullet$}\rput(12,11.5){$(2,2t+2)$}
  \psline[linestyle=solid](9,11)(9,10)
  \psline[linestyle=solid](9,10)(10,10)\rput(10,10){$\bullet$}
  \psline[linestyle=solid](10,10)(10,9)
  \psline[linestyle=solid](10,9)(11,9)\rput(11,9){$\bullet$}\rput(13.5,10.5){$\ddots$}
  \psline[linestyle=solid](10,9)(11,9)
  \psline[linestyle=solid](11,9)(11,8)
  \psline[linestyle=solid](11,8)(12,8)\rput(12,8){$\bullet$}\rput(15,8.5){$(k,2t+k)$}
  \psline[linestyle=solid](12,8)(12,1)
  \psline[linestyle=solid](1,12)(1,5)
  \psline[linestyle=solid](1,5)(2,5)
  \psline[linestyle=solid](2,5)(2,4)
  \psline[linestyle=solid](2,4)(3,4)
  \psline[linestyle=solid](3,4)(3,3)
  \psline[linestyle=solid](3,3)(4,3)
  \psline[linestyle=solid](4,3)(4,2)
  \psline[linestyle=solid](4,2)(5,2)
  \psline[linestyle=solid](5,2)(5,1)
  \psline[linestyle=solid](5,1)(12,1) \rput(6,7){$2t$-pfaffians}
  \rput(-2,6){\large{$L_t(k)$}:}
\end{pspicture}
\end{figure}

Moreover, given two integers $j$ and $k$ we let $\Y_{jk}$ be the ladder with
the $j+k$ upper outside corners
$(1,2t-1),(2,2t),(3,2t+1),\dots,(j,2t+j-2),(j,2t+j),(j+1,2t+j+1),\dots,(j+k-1,2t+j+k-1).$
We consider the ideal
$$L_t(j,k):=I_{2{\bf t}}(Y_{jk}), \   \mbox{  where \  \  }{\bf t}
=(\underbrace{t-1,\dots,t-1}_{j},\underbrace{t, \dots,t}_k).$$  
Notice that $L_t(0,k)=L_{t+1}(k,0)$. Moreover, this class contains
most of the classes that we have already introduced. More precisely:
$L_t(k)=L_t(0,k)$, $M_t=L_t(0,1)$, $SM_t=L_t(1,0)$, and
$N_t=L_t(1,1)$.  

\begin{figure}[htbp]
\centering
\psset{xunit=.5cm, yunit=.5cm}
\begin{pspicture}(-2,-1)(14,14)
\psline[linestyle=solid](1,12)(5,12)\rput(5,12){$\bullet$}\rput(7,12.5){$(1,2t-1)$}
\psline[linestyle=solid](5,12)(5,11)
\psline[linestyle=solid](5,11)(6,11)\rput(6,11){$\bullet$}\rput(7,11.5){$(2,2t)$}
\psline[linestyle=solid](6,11)(6,10)
\psline[linestyle=solid](6,10)(9,10)\rput(7,10){$\bullet$}\rput(7.5,10.5){\small{$(3,2t+1)$}}
\rput(9,10){$\bullet$}\rput(10.8,10.5){$(3,2t+3)$}
\psline[linestyle=solid](9,10)(9,9)
\psline[linestyle=solid](9,9)(10,9)\rput(10,9){$\bullet$}\rput(11.5,9.5){$(4,2t+4)$}
\psline[linestyle=solid](10,9)(10,8)
\psline[linestyle=solid](10,8)(11,8)\rput(11,8){$\bullet$}\rput(12.8,8.3){$(5,2t+5)$}
\psline[linestyle=solid](11,8)(11,7)
\psline[linestyle=solid](11,7)(12,7)\rput(12,7){$\bullet$}\rput(13.7,7.5){$(6,2t+6)$}
\psline[linestyle=solid](12,7)(12,1)
\psline[linestyle=solid](1,12)(1,8)
\psline[linestyle=solid](1,8)(2,8)
\psline[linestyle=solid](2,8)(2,7)
\psline[linestyle=solid](2,7)(3,7)
\psline[linestyle=solid](3,7)(3,4)
\psline[linestyle=solid](3,4)(4,4)
\psline[linestyle=solid](4,4)(4,3)
\psline[linestyle=solid](4,3)(5,3)
\psline[linestyle=solid](5,3)(5,2)
\psline[linestyle=solid](5,2)(6,2)
\psline[linestyle=solid](6,2)(6,1)
\psline[linestyle=solid](6,1)(12,1)
\psline[linestyle=dashed](7,10)(7,6)
\psline[linestyle=dashed](7,6)(3,6)
\rput(9,4){\large $2t$-pfaffians}
\rput(3.8,9){\large$(2t-2)$-pfaff.}
\rput(-2,6){\Large{$L_t(j,k)$ }\,:}
\rput(-2,4){\large{$j=3, k=4$}}
\end{pspicture}
\end{figure}

Given two integers $j$ and $k$, we let $\Z_{jk}$ be the ladder with
the $j+k$ upper outside corners
$(1,2t-1),(2,2t),(3,2t+1),\dots,(j,2t+j-2),(j+1,2t+j+1),\dots,(j+k,2t+j+k).$
We consider the ideal
$$H_t(j,k):=I_{2{\bf t}}(Z_{jk}), \   \mbox{  where \  \  }{\bf t}
=(\underbrace{t-1,\dots,t-1}_{j},\underbrace{t, \dots,t}_k).$$ 
It is $L_t(k)=H_t(0,k)=H_{t+1}(k,0)$.

\begin{figure}[htbp]
\centering
\psset{xunit=.5cm, yunit=.5cm}
\begin{pspicture}(-2,-1)(14,14)
\psline[linestyle=solid](1,12)(5,12)\rput(5,12){$\bullet$}\rput(7,12.5){$(1,2t-1)$}
\psline[linestyle=solid](5,12)(5,11)
\psline[linestyle=solid](5,11)(6,11)\rput(6,11){$\bullet$}\rput(7,11.5){$(2,2t)$}
\psline[linestyle=solid](6,11)(6,10)
\psline[linestyle=solid](6,10)(7,10)\rput(7,10){$\bullet$}\rput(8.5,10.5){\small{$(3,2t+1)$}}
\psline[linestyle=solid](7,10)(7,9)
\psline[linestyle=solid](7,9)(10,9)\rput(10,9){$\bullet$}\rput(11.5,9.5){$(4,2t+4)$}
\psline[linestyle=solid](10,9)(10,8)
\psline[linestyle=solid](10,8)(11,8)\rput(11,8){$\bullet$}\rput(12.8,8.3){$(5,2t+5)$}
\psline[linestyle=solid](11,8)(11,7)
\psline[linestyle=solid](11,7)(12,7)\rput(12,7){$\bullet$}\rput(13.7,7.5){$(6,2t+6)$}
\psline[linestyle=solid](12,7)(12,1)
\psline[linestyle=solid](1,12)(1,8)
\psline[linestyle=solid](1,8)(2,8)
\psline[linestyle=solid](2,8)(2,7)
\psline[linestyle=solid](2,7)(3,7)
\psline[linestyle=solid](3,7)(3,6)
\psline[linestyle=solid](3,6)(4,6)
\psline[linestyle=solid](4,6)(4,4)
\psline[linestyle=solid](4,4)(4,3)
\psline[linestyle=solid](4,3)(5,3)
\psline[linestyle=solid](5,3)(5,2)
\psline[linestyle=solid](5,2)(6,2)
\psline[linestyle=solid](6,2)(6,1)
\psline[linestyle=solid](6,1)(12,1)
\psline[linestyle=dashed](7,9)(7,6)
\psline[linestyle=dashed](7,6)(4,6)
\rput(9,4){\large $2t$-pfaffians}
\rput(3.8,9){\large$(2t-2)$-pfaff.}
\rput(-2,6){\Large{$H_t(j,k)$ }\,:}
\rput(-2,4){\large{$j=3, k=3$}}
\end{pspicture}
\end{figure}


\section{Multiplicity of pfaffian ladder ideals}
 
In this section we give some formulas for  the multiplicity of the
ideals introduced in the previous section.  
Throughout the section, we denote by $e(I)$ the multiplicity of
$R/I$ for any ideal $I\subset R=K[X]$. All the formulas that we 
produce are obtained as a finite sum of positive
contributions. Therefore they are well suited to give lower bounds for
the multiplicity. 
In the sequel we will need the following well know fact, which we
prove for completeness. 
 
\begin{proposition}\label{liaisonMultiplicity}
Let $H,I,J\subset K[X]$ be homogeneous, saturated, unmixed
ideals. Assume that $H$ is Cohen-Macaulay and that $I$ is obtained
from $J$ via an elementary G-biliaison of height $\ell\in\ZZ$ on
$H$. Then $$e(I)=e(J)+\ell e(H).$$ 
\end{proposition}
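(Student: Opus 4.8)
The plan is to push the whole computation down to the Cohen--Macaulay quotient $A=R/H$ and then read the multiplicities off the leading coefficients of the relevant Hilbert polynomials. Set $c=\hgt(H)$, so that $\hgt(I)=\hgt(J)=c+1$, $\dim A=\dim R-c$, and $d:=\dim R/I=\dim R/J=\dim A-1$. The definition of a G-biliaison on $H$ (Definition~\ref{gbil}) requires $H\subseteq I$ and $H\subseteq J$, so $\bar I:=I/H$ and $\bar J:=J/H$ are graded ideals of $A$ of height $1$, with $A/\bar I=R/I$ and $A/\bar J=R/J$. Using the concrete description recalled after Definition~\ref{gbil}, the class-group equivalence provides homogeneous $f,g\in R$, nonzerodivisors modulo $H$ (they avoid every minimal, hence every associated, prime of the Cohen--Macaulay ideal $H$), with $\deg g=\deg f+\ell$ and $fI+H=gJ+H$; in $A$ this reads $\bar f\,\bar I=\bar g\,\bar J$.

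The first key step is to upgrade this equality to an isomorphism of graded modules. Since $\bar f$ and $\bar g$ are nonzerodivisors on $A$, multiplication by them induces graded isomorphisms $\bar I(-\deg f)\cong \bar f\,\bar I$ and $\bar J(-\deg g)\cong \bar g\,\bar J$. Because $\bar f\,\bar I=\bar g\,\bar J$ as submodules of $A$, these combine to give $\bar I(-\deg f)\cong \bar J(-\deg g)$, that is $\bar I\cong \bar J(-\ell)$ as graded $A$-modules; in particular $\dim_K \bar I_s=\dim_K \bar J_{s-\ell}$ for all $s$.

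Next I would feed this into the two short exact sequences $0\to\bar I\to A\to R/I\to 0$ and $0\to\bar J\to A\to R/J\to 0$. Subtracting the resulting Hilbert-function identities and inserting the isomorphism above, the Hilbert polynomials satisfy $P_{R/I}(s)-P_{R/J}(s)=P_{\bar J}(s)-P_{\bar J}(s-\ell)$ for $s\gg 0$. Here $\bar J$ has dimension $d+1$, and since $A/\bar J=R/J$ has dimension $d$, the polynomial $P_{\bar J}$ has the same degree-$d$ leading term as $P_A$, namely $\tfrac{e(H)}{d!}\,s^{d}$. The proof then closes with a one-line comparison of leading coefficients: the discrete difference $P_{\bar J}(s)-P_{\bar J}(s-\ell)$ is a polynomial of degree $d-1$ with leading coefficient $\ell\,\tfrac{e(H)}{(d-1)!}$, whereas $P_{R/I}(s)-P_{R/J}(s)$ has leading coefficient $\tfrac{e(I)-e(J)}{(d-1)!}$ (both $R/I$ and $R/J$ have dimension $d$). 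Equating the two yields $e(I)-e(J)=\ell\,e(H)$.

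The point to be careful with—more a matter of bookkeeping than a genuine obstacle—is exactly this drop in degree: $e(H)$ is the leading coefficient of a degree-$d$ polynomial while $e(I)$ and $e(J)$ come from degree-$(d-1)$ polynomials, and it is the shift $\ell$ in the forward difference that converts one into the other. I would also record explicitly why $f,g$ may be taken to be nonzerodivisors modulo $H$ and why $H\subseteq I,J$, since these facts (built into the generalized-divisor formulation of Definition~\ref{gbil}) are what license passing to $A$ and forming the multiplication isomorphisms; the hypotheses that $H$ is Cohen--Macaulay and that the ideals are saturated and unmixed are used precisely to guarantee the $S_2$/generically Gorenstein structure underlying the ideal class group and the equalities $\dim R/I=\dim R/J=d$. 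When $d=0$ the conclusion is read off directly from $K$-dimensions rather than from leading coefficients.
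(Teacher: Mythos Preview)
Your proof is correct. The key isomorphism $\bar I\cong\bar J(-\ell)$ of graded $A$-modules is precisely what the ideal--class--group condition in Definition~\ref{gbil} asserts (and the nonzerodivisor property of $f,g$ modulo $H$ is implicit in that formulation, as you note), and the Hilbert--polynomial bookkeeping that follows is routine and carefully handled, including the degenerate case $d=0$.

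The paper's proof takes a different, much terser, route: it passes to the associated schemes $U,S,T$, invokes the language of generalized divisors on the arithmetically Cohen--Macaulay scheme $U$, and reads the conclusion off the linear equivalence $S\sim T+\ell h$ together with additivity of degree under addition of the hyperplane class. Your argument is essentially an algebraic unpacking of that geometric statement: the linear equivalence is exactly your module isomorphism, and the degree identity $\deg(T+\ell h)=\deg T+\ell\deg U$ is what your leading--coefficient comparison proves from scratch. The advantage of the paper's version is brevity, at the cost of importing the generalized--divisor formalism; the advantage of yours is that it is self-contained and makes transparent where each hypothesis (Cohen--Macaulayness of $H$, unmixedness of $I$ and $J$, the height relation) actually enters.
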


\begin{proof}
Let $U,S,T$ be the schemes associated to $H,I,J$, respectively. Under
our assumptions, $U$ is arithmetically Cohen-Macaulay and $S,T$ are
generalized divisors on $U$. Moreover, $S$ is linearly equivalent to
$T+\ell h$ as generalized divisors on $U$, where $h$ denotes the 
hyperplane section class on $U$. In particular 
$$e(I)=\deg(S)=\deg(T)+\ell\deg(U)=e(J)+\ell e(H).$$
\end{proof}

We denote by $I_{t}^n$ the ideal generated by the $2t$-pfaffians of an
$n\times n$ skew-symmetric matrix of indeterminates. 
In \cite[Theorem 7]{K} Krattenthaler proved that
\begin{equation}\label{krattenthaler}
e(I_{t}^n )=\prod_{1\le i \le j \le n-2t+1}
\frac{2(t-1)+i+j}{i+j}.
\end{equation} 
In particular for the ideals $M_t$ and $SM_t$ one has:
$$e(M_t) =\prod_{1\le i \le j \le 2}
\frac{2(t-1)+i+j}{i+j}, \  \   \   e(SM_t) =\prod_{1\le i\le j \le 3}
\frac{2(t-1)+i+j}{i+j}.$$

From the results in~\cite{DGo} one can easily deduce a formula for the
multiplicity of the ideal $L_t^n$.

\begin{proposition}
$$\mbox{$e(L_t^n)=$\Large $ \frac{(n-2t+2)!}{(2n-4t+4)!}
\left[\frac{(2n-2t+2)!}{n!}-\frac{(n-1)!}{(2t-3)!}\right]$ }
\prod_{1\le i\le j\le n-2t+2} \frac{2(t-1)+i+j}{i+j}$$ 
\end{proposition}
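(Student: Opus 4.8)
The plan is to recognize $L_t^n$ as the middle term of a single elementary G-biliaison linking the two full-matrix pfaffian ideals $I_t^n$ and $I_{t-1}^{n-2}$, and then to combine Proposition~\ref{liaisonMultiplicity} with Krattenthaler's closed form~(\ref{krattenthaler}).

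First I would set up the biliaison by running the step recalled before Theorem~\ref{step} on $I=I_t^n=I_{2t}(X)$, which is the pfaffian ideal of the single square ladder with upper corner $(1,n)$ and ${\bf t}=(t)$. Taking $k=1$, the construction produces the subladder $\Y'$ with upper corner $(2,n-1)$ and ${\bf t'}=(t-1)$, and the subladder $\Z$ with upper corners $(1,n-1)$ and $(2,n)$ and ${\bf u}=(t,t)$. Comparing with Definition~\ref{ideal}, the ideal $I_{2{\bf u}}(Z)$ is precisely $L_t^n$, while $I_{2{\bf t'}}(Y')$ only involves the indeterminates of the inner $(n-2)\times(n-2)$ block; adjoining the remaining (free) indeterminates does not change the multiplicity, so $e(I_{2{\bf t'}}(Y'))=e(I_{t-1}^{n-2})$. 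By Theorem~\ref{step}, $I_t^n$ is obtained from $I_{t-1}^{n-2}$ by an elementary G-biliaison of height $1$ on $L_t^n$.

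Next I would apply Proposition~\ref{liaisonMultiplicity}: its hypotheses are met since pfaffian ideals of ladders are prime and Cohen-Macaulay by~\cite{DGo}, hence homogeneous, saturated and unmixed, and $L_t^n$ is in particular Cohen-Macaulay. With $\ell=1$ this gives $e(I_t^n)=e(I_{t-1}^{n-2})+e(L_t^n)$, that is,
$$e(L_t^n)=e(I_t^n)-e(I_{t-1}^{n-2}).$$
Feeding in~(\ref{krattenthaler}) and observing that the substitution $t\mapsto t-1$, $n\mapsto n-2$ fixes the range $1\le i\le j\le n-2t+1$ (shifting only $2(t-1)$ to $2(t-2)$), I obtain
$$e(L_t^n)=\prod_{1\le i\le j\le n-2t+1}\frac{2(t-1)+i+j}{i+j}-\prod_{1\le i\le j\le n-2t+1}\frac{2(t-2)+i+j}{i+j}.$$

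The only laborious part is to reshape this difference into the stated expression. I would factor out the first product, so that the difference becomes that product times $1-\prod_{1\le i\le j\le n-2t+1}\frac{2(t-2)+i+j}{2(t-1)+i+j}$. For each fixed $i$ the inner product over $j$ in this last expression telescopes, and the surviving factors, collected over all $i$, run over consecutive integers and so collapse into a quotient of factorials, turning the bracket into an explicit factorial ratio. Finally I would trade the product up to $n-2t+1$ for the one up to $n-2t+2$ appearing in the statement by factoring off the extra column $j=n-2t+2$—again a factorial quotient—and absorb the result into the prefactor. I expect this factorial bookkeeping to be the real obstacle: each manipulation is elementary, but the normalization is delicate, so I would pin down the constants against small cases such as $t=2,\ n=5,6$ before committing to the final closed form.
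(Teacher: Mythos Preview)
Your approach is exactly the paper's: identify $L_t^n$ as the ideal $J$ in a single biliaison step linking two full-matrix pfaffian ideals, apply Proposition~\ref{liaisonMultiplicity}, substitute Krattenthaler's formula, and then massage the difference of products into factorial form. The telescoping/factoring you describe mirrors the paper's computation line for line.

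One point worth flagging: your indexing of the biliaison is the correct one. Running the step of Theorem~\ref{step} on the square ladder with corner $(1,n)$ produces $\Z$ with corners $(1,n-1),(2,n)$, which is $L_t^n$, and $\Y'$ with corner $(2,n-1)$, giving $I_{t-1}^{n-2}$; hence $e(L_t^n)=e(I_t^n)-e(I_{t-1}^{n-2})$, exactly as you wrote. The paper's proof instead writes $e(L_t^n)=e(I_t^{n+1})-e(I_{t-1}^{n-1})$, which is an off-by-one: that biliaison actually takes place on $L_t^{n+1}$. Consequently the displayed closed form (with the product running to $n-2t+2$ and the bracket as printed) does not evaluate correctly---for instance at $t=2,\ n=5$ it does not return $e(L_2^5)=4$. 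Your instinct to pin down the constants against $t=2,\ n=5,6$ before committing is well placed: those checks will expose the discrepancy, and after shifting the indices (effectively $n\mapsto n-1$ in the printed formula, so that the product runs to $n-2t+1$) your derivation and the paper's agree.
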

 
\begin{proof}
By Theorem~\ref{step} the ideal $I_t^{n+1}$ is obtained from
$I_{t-1}^{n-1}$ via an elementary G-biliaison of height $1$ on
$L_t^n$. Hence by Proposition~\ref{liaisonMultiplicity}
$$e(L_t^n)=e(I_t^{n+1})-e(I_{t-1}^{n-1}).$$ Substituting
(\ref{krattenthaler}) we obtain 
$e(L_t^n)=$
$$
\displaystyle\prod_{1\le i \le j \le n-2t+2} \frac{1}{i+j}
\Big[\displaystyle\prod_{1\le i \le j \le n-2t+2}(2t-2+i+j)-
\displaystyle\prod_{1\le i \le j \le
  n-2t+2}(2t-4+i+j)\Big].
$$
Since  $$\displaystyle\prod_{1\le i \le j \le
  n-2t+2}(2t-4+i+j)=
\prod_{0\le i \le j \le n-2t+1}(2t-2+i+j)$$
by means of direct computation one gets 
$$\begin{array}{l} 
\displaystyle\prod_{1\le i \le j \le
  n-2t+2}(2t-2+i+j)-\displaystyle\prod_{1\le i \le j \le
  n-2t+2}(2t-4+i+j)= \\ 
\displaystyle\prod_{1\le i \le j \le n-2t+1}
(2(t-1)+i+j)\Big[\displaystyle\prod_{1\le i\le
  n-2t+2}(n+i)-\displaystyle\prod_{0\le j\le n-2t+1}(2t-2+j)\Big]
\end{array}.$$
The result now follows from the equality
$$\begin{array}{l} 
\displaystyle\prod_{1\le i \le j \le n-2t+2} \frac{1}{i+j}\displaystyle\prod_{1\le i \le j \le n-2t+1}
(2(t-1)+i+j)= \\
\displaystyle\prod_{1\le i \le j \le n-2t+1}
\frac{(2(t-1)+i+j)}{i+j}\displaystyle\prod_{1\le i \le n-2t+2}
\frac{1}{n-2t+2+i}. 
\end{array}$$
\end{proof}
 
The case of ideals generated by maximal pfaffians of a matrix has been
extensively studied. In particular it is well known that 
\begin{equation}\label{gorcod3}
e(L_t(1))=e(M_t)=1+2^2+3^2+\cdots+t^2
\end{equation}
(see~\cite[Section~6]{HTV}, and~\cite[Theorem~5.6 and the following
example]{HT}).

We deduce the following formulas from Theorem~\ref{step}. 

\begin{proposition}\label{f_2(t)} 
$$e(L_t(2))=1+\sum_{s=2}^t [ 2s(1+2^2+\cdots +s^2)-s^3]$$ and 
$$e(N_t)= 1+\sum_{s=2}^{t-1}[2s(1+2^2+\cdots +s^2)-s^3]+t(1+2^2+\cdots +(t-1)^2).$$
\end{proposition}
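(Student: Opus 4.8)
The plan is to descend each of the two ideals one pfaffian size at a time using the elementary biliaison of Theorem~\ref{step}, reading off the effect on multiplicity from Proposition~\ref{liaisonMultiplicity}, and to prove both formulas simultaneously by induction on $t$. The key observation is that the two families interlace in a single descending chain. Indeed, applying Theorem~\ref{step} to $N_t$ (upper corners $(1,2t-1),(1,2t+1)$ and ${\bf t}=(t-1,t)$) at the corner $(1,2t+1)$ replaces it by $(2,2t)$ and lowers the corresponding size to $t-1$, so the ideal $I'$ produced has corners $(1,2t-1),(2,2t)$ and sizes $(t-1,t-1)$: this is exactly $L_{t-1}(2)$. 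Likewise, applying Theorem~\ref{step} to $L_t(2)$ (corners $(1,2t+1),(2,2t+2)$, ${\bf t}=(t,t)$) at the corner $(1,2t+1)$ yields $I'$ with corners $(2,2t),(2,2t+2)$ and sizes $(t-1,t)$, which is $N_t$ after the index shift $i\mapsto i-1$ (an automorphism of $K[X]$, hence multiplicity preserving). Thus there is a chain $L_{t-1}(2)\to N_t\to L_t(2)$ of elementary biliaisons of height $1$.

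All the ideals appearing, including the two linking ideals, are prime, normal and Cohen-Macaulay pfaffian ideals of ladders by \cite{DGo}, so Proposition~\ref{liaisonMultiplicity} applies with $\ell=1$ and gives
\[ e(N_t)=e(L_{t-1}(2))+e(J_2),\qquad e(L_t(2))=e(N_t)+e(J_1), \]
where $J_2$ is the ladder ideal with corners $(1,2t-1),(1,2t),(2,2t+1)$ and sizes $(t-1,t,t)$, and $J_1$ is the ladder ideal with corners $(1,2t),(2,2t+1),(2,2t+2)$ and sizes $(t,t,t)$. In $J_1$ the corner $(2,2t+1)$ is redundant, so $J_1=I_{2t}(Z)$ with $Z$ the ladder of corners $(1,2t),(2,2t+2)$.

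The heart of the matter, and the step I expect to be the main obstacle, is to show
\[ e(J_2)=t\,e(M_{t-1}) \qquad\text{and}\qquad e(J_1)=t\,e(M_t). \]
I would obtain these from the product criterion of Theorem~\ref{prod}, which factors the multiplicity of each of these ladder ideals as the product of the multiplicities of two subladder pfaffian ideals. For $J_2$ the factorization isolates the $(2t-1)\times(2t-1)$ block with its $(2t-2)$-pfaffians --- the maximal-pfaffian ideal $M_{t-1}$, of multiplicity $e(M_{t-1})$ --- from a complementary region in which, once the smaller block is imposed, the surviving generator is a single full $2t$-pfaffian, a form of degree $t$ cutting out a hypersurface of multiplicity $t$; this gives $e(J_2)=t\,e(M_{t-1})$. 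For $J_1$ the factorization isolates the $(2t+1)\times(2t+1)$ block with its $2t$-pfaffians (the maximal-pfaffian ideal $M_t$, multiplicity $e(M_t)$) from the $2t\times 2t$ block carrying its single full $2t$-pfaffian, again of multiplicity $t$, whence $e(J_1)=t\,e(M_t)$. The delicate point is to check that the numerical hypothesis of Theorem~\ref{prod} holds for these specific ladders and that the complementary region really collapses to one pfaffian; I would verify the base case $t=2$ by hand, where $J_2=(x_{12},x_{13},x_{23})+(x_{24}x_{35}-x_{25}x_{34})$ has multiplicity $2=t$ and $J_1$ has multiplicity $10=2\,e(M_2)$, and then argue the pattern in general.

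Granting these, the recursions read $e(N_t)=e(L_{t-1}(2))+t\,e(M_{t-1})$ and $e(L_t(2))=e(L_{t-1}(2))+t\,e(M_t)+t\,e(M_{t-1})$. Using $e(M_s)=1+2^2+\cdots+s^2$ from (\ref{gorcod3}) and the identity $e(M_s)-e(M_{s-1})=s^2$, the increment for $L_t(2)$ simplifies to $t\,e(M_t)+t\,e(M_{t-1})=2t\,e(M_t)-t^3$. Finally the base case $e(L_1(2))=1$ is immediate, since $L_1(2)$ is generated by all but one of the indeterminates and $R/L_1(2)$ is a polynomial ring in a single variable. Telescoping the recursion for $L_t(2)$ from $2$ to $t$ then yields $e(L_t(2))=1+\sum_{s=2}^t[2s\,e(M_s)-s^3]$, which is the first formula, and substituting the resulting value of $e(L_{t-1}(2))$ into $e(N_t)=e(L_{t-1}(2))+t\,e(M_{t-1})$ gives the stated formula for $e(N_t)$.
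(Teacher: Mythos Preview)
Your overall architecture is exactly the paper's: the same two biliaison steps interlacing $L_{t-1}(2)\to N_t\to L_t(2)$, the same recursions
\[
e(N_t)=e(L_{t-1}(2))+e(J_2),\qquad e(L_t(2))=e(N_t)+e(J_1),
\]
and the same telescoping with $e(M_s)=1+2^2+\cdots+s^2$. So there is nothing genuinely different in the strategy.

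Where you diverge is in computing $e(J_1)$ and $e(J_2)$, and here your argument is more roundabout than necessary and, for $J_2$, not correct as written. The paper simply observes that $J_1=M_t+(f)$ and $J_2=M_{t-1}+(g)$, with $f,g$ single $2t$-pfaffians not in the prime ideals $M_t$, $M_{t-1}$; since $R/M_s$ is a Cohen-Macaulay domain, $f$ (resp.\ $g$) is regular of degree $t$, whence $e(J_1)=t\,e(M_t)$ and $e(J_2)=t\,e(M_{t-1})$ immediately. No appeal to Theorem~\ref{prod} is needed.

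Your use of Theorem~\ref{prod} works cleanly for $J_1$ (once you drop the redundant corner $(2,2t+1)$, the split into $(1,2t)$ and $(2,2t+2)$ does satisfy $\tilde Y_1\cap\tilde Y_2=\emptyset$), but not for $J_2$ in the form you describe. If you take $\Y_1$ to be the $(2t-1)\times(2t-1)$ block (corner $(1,2t-1)$, size $t-1$) and $\Y_2$ to carry the two remaining corners $(1,2t),(2,2t+1)$ with size $t$, then $\tilde Y_1\cap\tilde Y_2\ni x_{t,t+1}\neq\emptyset$, so the hypothesis of Theorem~\ref{prod} fails; and even if it held, $I_{2{\bf t_2}}(Y_2)$ is a complete intersection of two $2t$-pfaffians, of multiplicity $t^2$, not $t$. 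Your phrase ``once the smaller block is imposed, the surviving generator is a single full $2t$-pfaffian'' is really the Laplace-expansion observation that the pfaffian on rows $1,\ldots,2t$ already lies in $M_{t-1}$, i.e.\ that $J_2=M_{t-1}+(g)$ with one pfaffian $g$. Once you say that, you are back to the paper's direct argument and Theorem~\ref{prod} is superfluous; if you insist on using it, first drop the redundant corner $(1,2t)$ and then split $(1,2t-1)$ from $(2,2t+1)$, which does satisfy the disjointness hypothesis. Either way the missing step is the explicit verification that the extra $2t$-pfaffian is already in $M_{t-1}$ via cofactor expansion along its last row, which is what makes the linking ideal $M_{t-1}+(g)$ rather than $M_{t-1}+(g_1,g_2)$.
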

 
\begin{proof}
By Theorem~\ref{step} the ideal $L_t(2)$ is obtained from $N_t$
via an elementary G-biliaison of height $1$ on $M_t+(f)$, where $f$ is
a $2t$-pfaffian which is regular modulo $M_t$. Thus by
Proposition~\ref{liaisonMultiplicity} one has 
\begin{equation}\label{lt2_first}
e(L_t(2))=e(N_t)+e(M_t+(f))=e(N_t)+te(M_t).\end{equation} 
Moreover the ideal $N_t$ is obtained from  $L_{t-1}(2)$ via an
elementary G-biliaison of height $1$ on $M_{t-1}+(g)$, where $g$ is a
$2t$-pfaffian which is regular modulo $M_{t-1}$. Therefore 
\begin{equation}\label{combin}
e(N_t)=e(L_{t-1}(2))+te(M_{t-1})
\end{equation} 
and combining (\ref{lt2_first}) and (\ref{combin}) one gets 
\begin{equation}\label{lt2}
e(L_t(2))=e(L_{t-1}(2))+te(M_{t-1})+te(M_t).
\end{equation}
Finally by (\ref{lt2}) and (\ref{gorcod3}), after solving the recursion
one obtains 
$$e(L_t(2))=1+\sum_{s=2}^t [s(e(M_{s-1})+e(M_s))]=1+\sum_{s=2}^t
[2s(1+2^2+\cdots +s^2)-s^3].$$ The formula for $e(N_t)$ follows from
substituting the formula for $e(L_t(2))$ and (\ref{gorcod3}) in
(\ref{combin}).  
\end{proof}

We now deduce a formula for the multiplicity of ideals generated by
submaximal pfaffians.

\begin{corollary}\label{e(t)}
$$e(SM_t)=t+\sum_{r=2}^t\sum_{s=2}^r [ 2s(1+2^2+\cdots +s^2)-s^3].$$
\end{corollary}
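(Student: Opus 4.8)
The plan is to produce a recursion for $e(SM_t)$ by applying the G-biliaison step of Theorem~\ref{step} directly to $SM_t$, and then to solve it using the formula for $e(L_t(2))$ from Proposition~\ref{f_2(t)}. I regard $SM_t$ as the pfaffian ideal $I_{2{\bf t}}(Y)$ attached to the ladder $\Y$ with the single upper corner $(1,2t+2)$ and ${\bf t}=(t)$, so that $SM_t=I_{2t}(X)$ for a $(2t+2)\times(2t+2)$ matrix $X$. Taking $k=1$ in Theorem~\ref{step}, the smaller ideal $I'$ is attached to the shrunk corner $(2,2t+1)$ with exponent $t-1$; this is exactly $I_{2(t-1)}$ of the $2t\times 2t$ submatrix indexed by $2\le i,j\le 2t+1$, so $I'=SM_{t-1}$. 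The link $J=I_{2{\bf u}}(Z)$ is obtained by deleting the entry $(1,2t+2)$, whence $Z$ has upper corners $(1,2t+1)$ and $(2,2t+2)$ with ${\bf u}=(t,t)$; comparing with the definition, $J=L_t(2)$.

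Therefore Theorem~\ref{step} shows that $SM_t$ is obtained from $SM_{t-1}$ by an elementary G-biliaison of height $1$ on $L_t(2)$. Since pfaffian ideals of ladders are prime and Cohen-Macaulay by~\cite{DGo}, the hypotheses of Proposition~\ref{liaisonMultiplicity} are met, and it yields the recursion $e(SM_t)=e(SM_{t-1})+e(L_t(2))$. The base case is $e(SM_1)=1$: for $t=1$ the ideal $SM_1$ is generated by the entries of a $4\times 4$ skew-symmetric matrix, i.e. it is the irrelevant maximal ideal, so $e(SM_1)=1$ (equivalently, evaluate Krattenthaler's product (\ref{krattenthaler}) at $t=1$).

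Solving the recursion with this initial value gives $e(SM_t)=1+\sum_{r=2}^t e(L_r(2))$. Substituting $e(L_r(2))=1+\sum_{s=2}^r[2s(1+2^2+\cdots+s^2)-s^3]$ from Proposition~\ref{f_2(t)} and absorbing the $t-1$ resulting constant terms into the leading $1$ produces the claimed formula $e(SM_t)=t+\sum_{r=2}^t\sum_{s=2}^r[2s(1+2^2+\cdots+s^2)-s^3]$.

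The only real content lies in the first paragraph, namely the identification of the biliaison data: verifying that shrinking the unique corner of the full square ladder yields $SM_{t-1}$, and that deleting that corner yields precisely $L_t(2)$. Once this is in place, the remaining steps (invoking Proposition~\ref{liaisonMultiplicity}, pinning down the base case, and carrying out the elementary summation) are routine.
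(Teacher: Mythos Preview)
Your proof is correct and follows essentially the same approach as the paper: the paper also derives the recursion $e(SM_t)=e(SM_{t-1})+e(L_t(2))$ from the G-biliaison of Theorem~\ref{step} and then solves it using Proposition~\ref{f_2(t)}. You have simply spelled out in more detail the identification of the biliaison data $I'=SM_{t-1}$, $J=L_t(2)$, the base case $e(SM_1)=1$, and the final summation, all of which the paper leaves implicit.
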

 
\begin{proof}
Since $SM_t$ is obtained from $SM_{t-1}$ via an elementary G-biliaison
of height $1$ on $L_t(2)$, one has $e(SM_t)=e(SM_{t-1})+e(L_t(2))$. By
solving the recursion and using Proposition \ref{f_2(t)}, one obtains
the result.
\end{proof}

Let $\Y=\Y_1\cup\Y_2$ be a ladder which is union of two smaller
ladders. Let $I_1=I_{2{\bf t_1}}(Y_1)$ and $I_2=I_{2{\bf t_2}}(Y_2)$ be pfaffian
ideals associated to the ladders $\Y_1$ and $\Y_2$, and let the upper
corners of $\Y$ be the union of the upper corners of $\Y_1$ and
$\Y_2$. Let ${\bf t}={\bf t_1}\oplus {\bf t_2}$ be the vector obtained
by appending the 
vector ${\bf t_2}$ to the vector ${\bf t_1}$ and let $I=I_{2t}(Y)=I_1+I_2$ be the
pfaffian ideal associated to the ladder $\Y$. If
$\Y_1\cap\Y_2=\emptyset$, one can easily show that 
\begin{equation}\label{product}
e(I)=e(I_1)e(I_2).
\end{equation} 
The following theorem gives a sufficient condition on the ladder so
that (\ref{product}) holds.

\begin{theorem}\label{prod}
Let $\Y,\Y_1,\Y_2$ be ladders, $\Y=\Y_1\cup\Y_2$. Let
$I_1=I_{2{\bf t_1}}(Y_1)$ and $I_2=I_{2{\bf t_2}}(Y_2)$ be pfaffian ideals of
ladders associated to $\Y_1$ and $\Y_2$. Let ${\bf t}={\bf t_1}\oplus
{\bf t_2}$ and let $I=I_{2{\bf t}}(Y)=I_1+I_2$ be the corresponding
pfaffian ideal of 
ladder. Let $\tilde{\Y},\tilde{\Y_1},\tilde{\Y_2}$ be defined as in
Notation~\ref{ladderheight}, and let
$\tilde{Y},\tilde{Y_1},\tilde{Y_2}$ be the corresponding sets of
indeterminates. If
$\tilde{Y_1}\cap\tilde{Y_2}=\emptyset$, then $$e(I)=e(I_1)e(I_2).$$
\end{theorem}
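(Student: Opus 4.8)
The plan is to induct on the number $|Y|$ of indeterminates appearing in the ladder $\Y$, using the elementary G-biliaison step of Theorem~\ref{step} to peel one unit off a subladder at a time, while keeping the product structure $\Y=\Y_1\cup\Y_2$ intact. The base case is $Y_1\cap Y_2=\emptyset$: here the two ladders are (essentially) disjoint and (\ref{product}) gives $e(I)=e(I_1)e(I_2)$ at once. The only subtlety is that $\Y_1$ and $\Y_2$ could still meet in a single diagonal point; since the ideals only involve the off-diagonal indeterminates, this is harmless and we may treat the ladders as disjoint.

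For the inductive step assume $Y_1\cap Y_2\neq\emptyset$. I first observe that some entry of $\mathbf t=\mathbf t_1\oplus\mathbf t_2$ must be at least $2$: if all $t_k=1$ then $\tilde\X_k=\X_k$ for every $k$, so $\tilde\Y_i=\Y_i$, and the hypothesis $\tilde Y_1\cap\tilde Y_2=\emptyset$ would force $Y_1\cap Y_2=\emptyset$, a contradiction. Relabelling if necessary (the conclusion is symmetric in $I_1,I_2$), I may assume the chosen corner $k$ belongs to $\Y_1$, and I apply the step of Theorem~\ref{step} to $I$ at the corner $(a_k,b_k)$. Since the construction only modifies the $k$-th corner, the resulting ideals decompose compatibly with $\Y=\Y_1\cup\Y_2$:
$$I'=I_{2\mathbf t'}(Y')=I_1'+I_2,\qquad J=I_{2\mathbf u}(Z)=J_1+I_2,$$
where $I_1'$ and $J_1$ are precisely the shrunk ideal and the biliaison locus produced by applying Theorem~\ref{step} to $I_1$ at the same corner. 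Proposition~\ref{liaisonMultiplicity}, whose hypotheses hold because pfaffian ideals of ladders are prime and Cohen--Macaulay by~\cite{DGo}, then yields $e(I)=e(I')+e(J)$ and $e(I_1)=e(I_1')+e(J_1)$.

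To close the induction I must verify that both $I'$ and $J$ fall again under the hypotheses of the theorem, with strictly fewer indeterminates. The count is immediate: $\Z$ is $\Y$ with the entry $(a_k,b_k)$ and its symmetric removed, so $|Z|=|Y|-1$, while $\Y'$ replaces $\X_k$ by the strictly smaller square $[a_k+1,b_k-1]^2$, removing the outer corner $(a_k,b_k)$. The crucial point is that tilde-disjointness is preserved. For $\Y'$ this holds because the map $(a_k,b_k,t_k)\mapsto(a_k+1,b_k-1,t_k-1)$ fixes the shrunk corner $(a_k+t_k-1,\,b_k-t_k+1)$ of Notation~\ref{ladderheight}; hence $\tilde{\Y'}=\tilde\Y$, in particular $\tilde{Y_1'}=\tilde Y_1$, so $\tilde{Y_1'}\cap\tilde Y_2=\emptyset$. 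For $\Z$ one checks that the two squares with outside corners $(a_k,b_k-1)$ and $(a_k+1,b_k)$ replacing $\X_k$ have shrunk corners $(a_k+t_k-1,\,b_k-t_k)$ and $(a_k+t_k,\,b_k-t_k+1)$, both lying inside $\tilde\X_k$; thus the tilde of the $\Y_1$-part of $\Z$ is contained in $\tilde\Y_1$ and so is again disjoint from $\tilde Y_2$. Applying the inductive hypothesis to $I'=I_1'+I_2$ and to $J=J_1+I_2$ gives $e(I')=e(I_1')e(I_2)$ and $e(J)=e(J_1)e(I_2)$, whence
$$e(I)=e(I')+e(J)=\bigl(e(I_1')+e(J_1)\bigr)e(I_2)=e(I_1)e(I_2).$$

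I expect the main obstacle to be exactly this combinatorial bookkeeping on the tilde ladders: one must check carefully that after a single biliaison step both the shrunk ideal $I'$ and the liaison locus $J$ are again pfaffian ideals of ladders of product form $\Y_1\cup\Y_2$ satisfying $\tilde Y_1\cap\tilde Y_2=\emptyset$, and that the step applied to the full ideal $I$ genuinely restricts to the step on $I_1$ with locus $J_1$, tensored with the untouched factor $I_2$. Once the invariance $\tilde{\Y'}=\tilde\Y$ and the inclusion $\tilde{Z_1}\subseteq\tilde\Y_1$ are in place, the additivity of multiplicity under G-biliaison (Proposition~\ref{liaisonMultiplicity}) converts the additive recursion into the multiplicative identity automatically. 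A minor point to dispatch is the applicability of Theorem~\ref{step} at the chosen corner, i.e. that $\Y'$ and $\Z$ meet the normalizing assumptions recalled after Definition~\ref{ideal}; this is part of the setup of~\cite{DGo} and causes no trouble since we only ever shrink a corner with $t_k\ge 2$.
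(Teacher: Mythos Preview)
Your inductive scheme has a genuine gap: the quantity $|Y|$ need not decrease after a biliaison step. You assert that $\Z$ is $\Y$ with the entry $(a_k,b_k)$ removed and that $\Y'$ drops that corner, but this is only true when $(a_k,b_k)$ lies in no other square $\X_j$. The normalizing conventions allow $a_{k+1}=a_k$ (which forces $(a_k,b_k)\in\X_{k+1}$), and then $\Y'=\Y=\Z$ as point sets. A concrete instance satisfying the theorem's hypothesis: take $\Y_1=[1,5]^2$ with $t_1=2$ and $\Y_2=[1,10]^2$ with $t_2=4$. Here $\tilde Y_1=\{x_{23},x_{24},x_{34}\}$ and $\tilde Y_2=\{x_{ij}:4\le i<j\le 7\}$ are disjoint and $Y_1\cap Y_2\neq\emptyset$, yet the unique corner of $\Y_1$ is interior to $\Y$, so peeling it leaves $|Y|$ unchanged and your induction stalls. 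A repair would require a subtler well-founded measure---perhaps $|Y_1|+|Y_2|$, together with an argument that one can always choose a corner with $t_k\ge 2$ that is genuinely outer in its own $\Y_i$---and this is not the ``minor point'' you flag at the end.

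For comparison, the paper's argument avoids induction and biliaison entirely. It writes $K[Y]/I$ as the quotient of the Cohen--Macaulay ring $K[Y_1]/I_1\otimes_K K[Y_2]/I_2$ by the $|Y_1\cap Y_2|$ linear forms identifying the shared indeterminates. The hypothesis $\tilde Y_1\cap\tilde Y_2=\emptyset$ enters exactly once, via the height formula of Proposition~1.10, to give $\hgt I=\hgt I_1+\hgt I_2$; a dimension count then shows the linear forms form a regular sequence. Cutting a Cohen--Macaulay ring by a linear regular sequence preserves multiplicity, so $e(I)=e(I_1)e(I_2)$ in three lines.
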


\begin{proof}
Let $\Z=\Y_1\cap\Y_2$, $R_1=K[Y_1]/I_1$, and
$R_2=K[Y_2]/I_2$. We have $$K[Y]/I\cong
R_1\otimes_K R_2/J$$ where $J$ is generated by $|Z|$ linear forms
(which identify the corresponding indeterminates in $Y_1$ and
$Y_2$). If $\tilde{Y_1}\cap\tilde{Y_2}=\emptyset$, then $$\hgt I=\hgt I_1+\hgt
I_2$$ hence $$\hgt J=\dim R_1\otimes R_2-\dim K[Y]/I=|Y_1|-\hgt
I_1+|Y_2|-\hgt I_2-|Y|+\hgt I=|Z|.$$ Since $R_1\otimes R_2$ is a
Cohen-Macaulay ring, $J$ is generated by a regular sequence and
$$e(I)=e(R_1\otimes_K R_2/J)=e(I_1)e(I_2).$$
\end{proof}

We now give an example of a family of pfaffian ideals of ladders whose
multiplicity can be computed directly from~Theorem~\ref{prod}.

\begin{proposition}\label{htjk}
$$e(H_t(j,k))=e(L_{t-1}(j))e(L_t(k)).$$
\end{proposition}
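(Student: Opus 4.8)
The plan is to realize $H_t(j,k)$ as a sum of two pfaffian ideals supported on complementary subladders and then invoke Theorem~\ref{prod}. Write $\Z_{jk}=\Y_1\cup\Y_2$, where $\Y_1$ carries the first $j$ upper corners $(1,2t-1),(2,2t),\ldots,(j,2t+j-2)$ and $\Y_2$ carries the last $k$ upper corners $(j+1,2t+j+1),\ldots,(j+k,2t+j+k)$. With the splitting $\mathbf{t}=\mathbf{t_1}\oplus\mathbf{t_2}$, where $\mathbf{t_1}=(t-1,\ldots,t-1)$ has length $j$ and $\mathbf{t_2}=(t,\ldots,t)$ has length $k$, we have $H_t(j,k)=I_{2\mathbf{t_1}}(Y_1)+I_{2\mathbf{t_2}}(Y_2)$. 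By definition the first summand is exactly $L_{t-1}(j)$. The second summand is the diagonal translate by $(j,j)$ of the ladder with upper corners $(1,2t+1),\ldots,(k,2t+k)$ and constant vector $(t,\ldots,t)$, that is, of $L_t(k)$; since shifting all indices by a fixed constant is an isomorphism of polynomial rings taking one pfaffian ideal onto the other, $e(I_{2\mathbf{t_2}}(Y_2))=e(L_t(k))$.

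It then remains to verify the hypothesis of Theorem~\ref{prod}, namely $\tilde{Y_1}\cap\tilde{Y_2}=\emptyset$. Using Notation~\ref{ladderheight}, the upper corners of $\tilde{\Y_1}$ are $(i+t-2,\,t+i)$ for $i=1,\ldots,j$, so the indices occurring in $\tilde{\Y_1}$ range from $t-1$ up to $t+j$; in particular every indeterminate $x_{pq}\in\tilde{Y_1}$ satisfies $q\le t+j$. The upper corners of $\tilde{\Y_2}$ are $(j+i+t-1,\,t+j+i+1)$ for $i=1,\ldots,k$, so the indices occurring in $\tilde{\Y_2}$ range from $j+t$ up to $t+j+k+1$; in particular every $x_{pq}\in\tilde{Y_2}$ satisfies $p\ge j+t$. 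A common indeterminate $x_{pq}$ (with $p<q$) would force $q\le t+j\le p$, contradicting $p<q$. Hence $\tilde{Y_1}\cap\tilde{Y_2}=\emptyset$.

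With both hypotheses in place, Theorem~\ref{prod} yields $e(H_t(j,k))=e(I_{2\mathbf{t_1}}(Y_1))\,e(I_{2\mathbf{t_2}}(Y_2))=e(L_{t-1}(j))\,e(L_t(k))$, as claimed. I expect the only delicate point to be the index bookkeeping of the previous paragraph: the two enlarged ladders $\tilde{\Y_1}$ and $\tilde{\Y_2}$ abut along the single diagonal index $j+t$, and one must check that this shared index does not produce a common off-diagonal indeterminate. The fact that they meet in exactly one diagonal entry (rather than overlapping in a genuine square) is precisely what makes the disjointness of $\tilde{Y_1}$ and $\tilde{Y_2}$ hold, and it reflects the numerical gap built into the corners of $\Z_{jk}$ between the $(t-1)$-region and the $t$-region.
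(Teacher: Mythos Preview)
Your proof is correct and follows essentially the same approach as the paper: decompose $\Z_{jk}=\Y_1\cup\Y_2$ along the gap between the $(t-1)$-region and the $t$-region, identify the two summand ideals with $L_{t-1}(j)$ and (a translate of) $L_t(k)$, and verify the disjointness hypothesis of Theorem~\ref{prod} by computing the corners of $\tilde{\Y_1}$ and $\tilde{\Y_2}$. Your index bookkeeping and the observation that the two shrunk ladders share only the diagonal point $(t+j,t+j)$ match the paper's computation exactly; you are simply a bit more explicit about the translation argument and the $p<q$ contradiction.
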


\begin{proof}
Let $\Y=\Z_{jk}$ be the ladder with the $j+k$ upper  corners
$(1,2t-1),(2,2t),$ $\ldots,(j,2t+j-2),(j+1,2t+j+1),\ldots,(j+k,2t+j+k).$
Let $\Y_1$ be the ladder with the $j$ upper 
corners $(1,2t-1),\ldots,(j,2t+j-2)$ and let
$\Y_2$ be the ladder with the $k$ upper  corners
$(j+1,2t+j+1),\ldots,(j+k,2t+j+k).$ Clearly $\Y=\Y_1\cup\Y_2$. Let 
$${\bf t_1}=(\underbrace{t-1,\dots,t-1}_{j}),\;
{\bf t_2}=(\underbrace{t,\dots,t}_k),\; {\bf t}={\bf t_1\oplus
  t_2}=(\underbrace{t-1,\dots,t-1}_{j},\underbrace{t, \dots,t}_k).$$ 
Then $\tilde{\Y_1}$ is the ladder with upper outside corners
$(t-1,t+1),(t,t+2),\ldots,(t+j-2,t+j)$ and $\tilde{\Y_2}$ is the ladder
with upper outside corners $(t+j,t+j+2),\ldots,(t+j+k-1,t+j+k+1).$ Hence 
$\tilde{\Y_1}\cap\tilde{\Y_2}=
\{(t+j,t+j)\}$ and $Y_1\cap Y_2=\emptyset$. By Theorem~\ref{prod} it
follows that $$e(H_t(j,k))=e(I_1)e(I_2)$$ where $I_1=I_{2{\bf
    t_1}}(Y_1)$ and $I_2=I_{2{\bf t_2}}(Y_2)$. The thesis follows from
the observation that $I_1=L_{t-1}(j)$ and $I_2=L_t(k)$. 
\end{proof}

Combining Proposition~\ref{htjk} and Proposition~\ref{f_n(t)}, we
obtain a formula for the multiplicity of the ideals $L_t(j,k)$.

\begin{proposition}\label{ltjk}
For $j,k\geq 1$ we have
$$e(L_t(j,k))=e(L_{t-1}(j+k))+te(L_{t-1}(j+k-1))+\sum_{l=1}^{k-1}e(L_{t-1}(j+k-1-l))e(L_t(l)).$$
\end{proposition}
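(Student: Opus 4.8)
The plan is to prove the formula by a single elementary G-biliaison step, applied at the lowest $t$-sized corner of $L_t(j,k)$, which yields a recursion that I then unfold. Throughout I fix $j\geq 1$ and induct on $k$. Recall that the ladder $\Y_{jk}$ of $L_t(j,k)$ has upper corners $(1,2t-1),\ldots,(j,2t+j-2)$ of value $t-1$ and $(j,2t+j),(j+1,2t+j+1),\ldots,(j+k-1,2t+j+k-1)$ of value $t$.

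First I would set up the biliaison by applying Theorem~\ref{step} at the corner $(j,2t+j)$. This produces ideals $I'=I_{2{\bf t'}}(Y')$ and $J=I_{2{\bf u}}(Z)$ such that $L_t(j,k)$ is obtained from $I'$ by an elementary G-biliaison of height $1$ on $J$. Inspecting corners, $\Y'$ has upper corners $(1,2t-1),\ldots,(j+1,2t+j-1)$ of value $t-1$ and $(j+1,2t+j+1),\ldots,(j+k-1,2t+j+k-1)$ of value $t$; this is exactly the ladder of $L_t(j+1,k-1)$, so $e(I')=e(L_t(j+1,k-1))$. By Proposition~\ref{liaisonMultiplicity} with $\ell=1$ I obtain $e(L_t(j,k))=e(L_t(j+1,k-1))+e(J)$.

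The crux is to identify $J$. The ladder $\Z$ is obtained by replacing $(j,2t+j)$ with the two corners $(j,2t+j-1)$ and $(j+1,2t+j)$, both of value $t$, and I would show both are redundant. The corner $(j+1,2t+j)$ has square subladder contained in that of $(j+1,2t+j+1)$ with the same value $t$, so it contributes nothing whenever $k\geq 2$. The corner $(j,2t+j-1)$ carries a single generator, the pfaffian of the $2t\times 2t$ block on rows and columns $j,\ldots,2t+j-1$; expanding it along its last row exhibits it as a combination of $(2t-2)$-pfaffians on rows and columns $j,\ldots,2t+j-2$, so it lies in $I_{2(t-1)}(\{j\le\cdot\le 2t+j-2\})$, the ideal of the last $(t-1)$-corner. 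Hence for $k\geq 2$ one has $J=H_t(j,k-1)$, and Proposition~\ref{htjk} gives
$$e(L_t(j,k))=e(L_t(j+1,k-1))+e(L_{t-1}(j))e(L_t(k-1)),\qquad k\geq 2.$$
For $k=1$ the corner $(j+1,2t+j)$ is no longer absorbed, so $J=L_{t-1}(j)+(f)$ with $f$ the pfaffian of the block on rows and columns $j+1,\ldots,2t+j$, of degree $t$; since $L_{t-1}(j)$ is prime by~\cite{DGo} and a height count using the height formula of~\cite{DGo} gives $f\notin L_{t-1}(j)$, the element $f$ is regular modulo $L_{t-1}(j)$ and $e(J)=t\,e(L_{t-1}(j))$. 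Combined with $e(I')=e(L_{t-1}(j+1))$ this yields the base case $e(L_t(j,1))=e(L_{t-1}(j+1))+t\,e(L_{t-1}(j))$.

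Finally I would unfold. Iterating the $k\geq 2$ recursion raises the first index and lowers the second until the second reaches $1$, where the base case applies; collecting terms gives
$$e(L_t(j,k))=e(L_{t-1}(j+k))+t\,e(L_{t-1}(j+k-1))+\sum_{m=0}^{k-2}e(L_{t-1}(j+m))e(L_t(k-1-m)),$$
and the substitution $l=k-1-m$ rewrites the sum as $\sum_{l=1}^{k-1}e(L_{t-1}(j+k-1-l))e(L_t(l))$, which is the claimed formula. The main obstacle is the identification of $J$ in the third step: one must justify the two redundancies, in particular the pfaffian Laplace expansion placing the extra $2t$-pfaffian inside the $(t-1)$-corner ideal, and carefully separate $k\geq 2$ from $k=1$, since it is precisely the generator that survives when $k=1$ that produces the isolated term $t\,e(L_{t-1}(j+k-1))$.
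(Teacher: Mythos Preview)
Your proposal is correct and follows essentially the same route as the paper: the same elementary G-biliaison at the corner $(j,2t+j)$, the identification $I'=L_t(j+1,k-1)$ and $J=H_t(j,k-1)$ (for $k\ge 2$) resp.\ $J=L_{t-1}(j)+(f)$ (for $k=1$), and then the same recursion combined with Proposition~\ref{htjk}. The paper simply asserts $J=H_t(j,k-1)$ (tacitly invoking the redundancy conventions of Remarks~1.5 in~\cite{DGo}), whereas you spell out the two redundancies via the pfaffian Laplace expansion; and the paper phrases the conclusion as a formal induction on $k$ while you unfold the recursion explicitly, but these are the same argument.
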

 
\begin{proof}
We proceed by induction on $k\geq 1$. By Theorem~\ref{step},
$L_t(j,1)$ is obtained from 
$L_{t-1}(j+1)$ via an elementary G-biliaison on $L_{t-1}(j)+(f)$, where
$f$ is a $2t$-pfaffian which does not belong to $L_{t-1}(j)$. Hence by
Proposition~\ref{liaisonMultiplicity} 
$$e(L_t(j,1))=e(L_{t-1}(j+1))+te(L_{t-1}(j)).$$
This proves the thesis for $k=1$.
To establish the formula for $k\geq 2$, observe that $L_t(j,k)$ is
obtained from $L_t(j+1,k-1)$ via an elementary 
G-biliaison of height $1$ on $H_t(j,k-1)$. Hence by
Proposition~\ref{liaisonMultiplicity} and Proposition~\ref{htjk}
\begin{equation}\label{lh}
e(L_t(j,k))=e(L_t(j+1,k-1))+e(L_{t-1}(j))e(L_t(k-1)).
\end{equation} 
By induction hypothesis $e(L_t(j+1,k-1))=$ 
$$e(L_{t-1}(j+k))+te(L_{t-1}(j+k-1))+
\sum_{l=1}^{k-2}e(L_{t-1}(j+k-1-l))e(L_t(l))$$
and the thesis follows.
\end{proof}

Explicit formulas for $e(L_t(1))$ and $e(L_t(2))$ were given in
(\ref{gorcod3}) and in Proposition~\ref{f_2(t)}. Since $L_1(k)$ is
generated by indeterminates, $e(L_1(k))=1$ for any $k$.
The following formula allows us to calculate $e(L_t(k))$ recursively,
for $t\geq 2$ and $k\ge 3$. 

\begin{proposition}\label{f_n(t)}
For $t,k\geq 2$ we have $e(L_t(k))=$
$$e(L_{t-1}(k))+t[e(L_t(k-1))+e(L_{t-1}(k-1))]+
\sum_{l=1}^{k-2}e(L_{t-1}(k-1-l))e(L_t(l)).$$  
\end{proposition}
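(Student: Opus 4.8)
The plan is to reduce the statement to a single G-biliaison identity and then feed in the formula of Proposition~\ref{ltjk}. Since $L_t(k)=L_t(0,k)$, I expect $L_t(k)$ to sit one biliaison step above $L_t(1,k-1)$, exactly as $L_t(2)$ sits above $N_t=L_t(1,1)$ in the proof of Proposition~\ref{f_2(t)}. Concretely, the whole content beyond Proposition~\ref{ltjk} is the identity
$$e(L_t(k))=e(L_t(1,k-1))+t\,e(L_t(k-1)),$$
which I will call $(\star)$. Granting $(\star)$, the result follows by substituting the $j=1$ instance of Proposition~\ref{ltjk}, namely $e(L_t(1,k-1))=e(L_{t-1}(k))+t\,e(L_{t-1}(k-1))+\sum_{l=1}^{k-2}e(L_{t-1}(k-1-l))e(L_t(l))$, and collecting the two $t$-terms into $t[e(L_t(k-1))+e(L_{t-1}(k-1))]$.

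To prove $(\star)$ I would apply Theorem~\ref{step} to the first upper corner $(1,2t+1)$ of the ladder defining $L_t(0,k)$. Lowering this corner to $(2,2t)$ and decreasing its size from $t$ to $t-1$ produces an ideal $I'$ whose corners are $(2,2t),(2,2t+2),(3,2t+3),\dots,(k,2t+k)$; this is the ladder of $L_t(1,k-1)$ shifted by $(1,1)$, and multiplicity is insensitive to such a shift, so $e(I')=e(L_t(1,k-1))$. Theorem~\ref{step} then exhibits $L_t(k)$ as obtained from $I'$ via an elementary G-biliaison of height $1$ on the ideal $J=I_{2{\bf u}}(Z)$, where $\Z$ is the ladder obtained by splitting the removed corner. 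By Proposition~\ref{liaisonMultiplicity} we get $e(L_t(k))=e(I')+e(J)$, so $(\star)$ reduces to the single claim $e(J)=t\,e(L_t(k-1))$.

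The heart of the matter, and the step I expect to be the main obstacle, is the explicit identification of $J$. The ladder $\Z$ contains a shifted copy of the ladder of $L_t(k-1)$ together with two extra corners produced by the split. As in the base case of Proposition~\ref{ltjk} and in Proposition~\ref{f_2(t)}, where the analogous $J$ equals $M_t+(f)$, I expect these extra size-$t$ conditions to collapse, modulo $L_t(k-1)$, to a single $2t$-pfaffian $f$ that is a nonzerodivisor on $R/L_t(k-1)$; that is, $J=L_t(k-1)+(f)$ up to the harmless shift. Since a $2t$-pfaffian is homogeneous of degree $t$ and $R/L_t(k-1)$ is Cohen-Macaulay (indeed a normal domain by~\cite{DGo}), cutting by the regular element $f$ multiplies the multiplicity by $\deg f=t$, giving $e(J)=t\,e(L_t(k-1))$; this is also what makes $J$ Cohen-Macaulay and generically Gorenstein, as required for Proposition~\ref{liaisonMultiplicity}. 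Verifying the reduction of $J$ to $L_t(k-1)+(f)$ and the regularity of $f$ is the only genuinely geometric point; once it is in place, $(\star)$ and hence the proposition follow by the bookkeeping of the first paragraph.
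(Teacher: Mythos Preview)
Your proposal is correct and follows exactly the same route as the paper: one G-biliaison step from $L_t(k)$ down to $L_t(1,k-1)$ on $L_t(k-1)+(f)$, then substitution of the $j=1$ case of Proposition~\ref{ltjk}. The only comment is that what you call ``the main obstacle'' is in fact immediate: after splitting the corner $(1,2t+1)$ into $(1,2t)$ and $(2,2t+1)$, the second of these is redundant (its square subladder is contained in the one with corner $(2,2t+2)$, so its $2t$-pfaffian already lies in $L_t(k-1)$ by the Remarks after Definition~\ref{ideal}), while the first corresponds to a $2t\times 2t$ block and hence contributes exactly one $2t$-pfaffian $f$; since $f$ involves row~$1$, which lies outside the shifted $L_t(k-1)$-ladder, primality of $L_t(k-1)$ makes $f$ regular. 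The paper states $J=L_t(k-1)+(f)$ without further comment for this reason.
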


\begin{proof}
By Theorem \ref{step}, $L_t(k)$ is obtained from $L_t(1,k-1)$
via an elementary G-biliaison of height $1$ on $L_t(k-1)+(f)$, where $f$
is a $2t$-pfaffian which does not belong to $L_t(k-1)$. Hence by
Proposition~\ref{liaisonMultiplicity} and Proposition~\ref{ltjk}
$$e(L_t(k))=L_t(1,k-1)+te(L_t(k-1))=$$ 
$$e(L_{t-1}(k))+t[e(L_t(k-1))+e(L_{t-1}(k-1))]+\sum_{l=1}^{k-2}e(L_{t-1}(k-1-l))e(L_t(l)).$$
\end{proof}

\begin{remarks}
\begin{enumerate}
\item Proposition~\ref{f_n(t)} allows us to compute the
  multiplicity of the ideals $L_t(k)$ for any values of $t$ and
  $k$. This can in fact be done recursively, using as a starting point
  that $e(L_1(k))=1$ for any $k$, and the explicit formulas for the
  multiplicities of $L_t(1)=M_t$ and $L_t(2)$ which appear in
  (\ref{gorcod3}) and in Proposition~\ref{f_2(t)}, respectively.
\item Proposition~\ref{ltjk} allows us to compute the multiplicity of
  the ideals $L_t(j,k)$ for any values of $t,j,k$. One can in fact use
  Proposition~\ref{f_n(t)} to compute the multiplicities of
  $L_t(1),\ldots,L_t(k-1)$ and $L_{t-1}(j),\ldots,L_{t-1}(j+k)$.
\item Since $L_t(k)=L_t(0,k)$, the multiplicity of $L_t(j,k)$ for
$j=0$ is computed in Proposition~\ref{f_n(t)}. In fact, the formula
obtained in Proposition~\ref{f_n(t)} corresponds to the formula
computed in Proposition~\ref{ltjk} for $j=0$, taken ``cum grano
salis''.
\item The formula given in Proposition~\ref{ltjk} is false for $k=0$.
\end{enumerate}
\end{remarks}

Finally, we express the multiplicity of $SN_t$ in terms of the
multiplicities of $SM_t$ and $L_t(1,2)$. The latter two can be computed
by Proposition~\ref{e(t)} and Proposition~\ref{ltjk}.

\begin{proposition}\label{snt}
For $t\geq 1$ we have
$$e(SN_t)=\sum_{s=2}^t e(L_s(1,2))+\sum_{s=2}^{t-1} s\,e(SM_s)+1.$$
\end{proposition}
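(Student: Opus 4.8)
The plan is to derive a recursion for $e(SN_t)$ by chaining two elementary G-biliaisons coming from Theorem~\ref{step}, exactly in the spirit of the proofs of Proposition~\ref{f_2(t)} and Corollary~\ref{e(t)}, and then to solve it. Recall that $SN_t=I_{2{\bf t}}(Y)$ for the ladder with upper corners $(1,2t-1),(1,2t+2)$ and ${\bf t}=(t-1,t)$.

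\emph{First step.} I would apply the biliaison step of Theorem~\ref{step} to $SN_t$ at the second corner, $(a_2,b_2)=(1,2t+2)$ with $t_2=t$. The divisor ladder $\Z$ then has upper corners $(1,2t-1),(1,2t+1),(2,2t+2)$ with vector $(t-1,t,t)$, which is exactly $L_t(1,2)$; the source ladder has upper corners $(1,2t-1),(2,2t+1)$ with vector $(t-1,t-1)$, and I denote its pfaffian ideal by $I'$. Theorem~\ref{step} says that $SN_t$ is obtained from $I'$ by an elementary G-biliaison of height $1$ on $L_t(1,2)$, so Proposition~\ref{liaisonMultiplicity} gives $e(SN_t)=e(I')+e(L_t(1,2))$.

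\emph{Second step.} I would apply Theorem~\ref{step} to $I'$ at its first corner, $(a_1,b_1)=(1,2t-1)$ with $t_1=t-1$. The source ladder then has upper corners $(2,2t-2),(2,2t+1)$ and vector $(t-2,t-1)$, which is $SN_{t-1}$ up to the index shift $i\mapsto i-1$ and hence has the same multiplicity. The crucial point is to read off the generators of the divisor ladder, which has corners $(1,2t-2),(2,2t-1),(2,2t+1)$ and constant vector $(t-1,t-1,t-1)$: the square on rows and columns $2,\dots,2t+1$ contributes $SM_{t-1}$; the square on $2,\dots,2t-1$ contributes only the single full $(2t-2)$-pfaffian on those rows, which already lies in $SM_{t-1}$; and the square on $1,\dots,2t-2$ contributes the single full $(2t-2)$-pfaffian $f$ on $1,\dots,2t-2$, a form of degree $t-1$ which does not lie in $SM_{t-1}$ because it involves the indeterminates $x_{1j}$. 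Thus the divisor is $SM_{t-1}+(f)$ with $f$ regular modulo the prime ideal $SM_{t-1}$, so $e(SM_{t-1}+(f))=(t-1)e(SM_{t-1})$, and Proposition~\ref{liaisonMultiplicity} yields $e(I')=e(SN_{t-1})+(t-1)e(SM_{t-1})$.

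Combining the two steps gives the recursion $e(SN_t)=e(SN_{t-1})+(t-1)e(SM_{t-1})+e(L_t(1,2))$, and summing it (after reindexing $\sum_{s=3}^{t}(s-1)e(SM_{s-1})=\sum_{r=2}^{t-1} r\,e(SM_r)$) produces the claimed formula. The delicate point, and the part I expect to require the most care, is the small-$t$ bookkeeping: the second step is valid only once $I'$ is a genuine pfaffian ideal, which fails at $t=2$, where $I'$ is generated by linear forms so that $e(I')=1$ and hence $e(SN_2)=1+e(L_2(1,2))$ by the first step alone. I would therefore run the recursion for $t\ge 3$ with $SN_1$ (where $e(SN_1)=1$, both sums being empty) and $SN_2$ as separately verified base cases. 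Apart from this, the main obstacle is to confirm the three identifications $\Z=L_t(1,2)$, source $\cong SN_{t-1}$, and divisor $=SM_{t-1}+(f)$, since each rests on correctly tracking how the square subladders of the perturbed ladders decompose into previously named pfaffian ideals.
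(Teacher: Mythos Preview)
Your proof is correct and follows the same two-step biliaison strategy as the paper: first link $SN_t$ to the intermediate ideal on the ladder with corners $(1,2t-1),(2,2t+1)$ and vector $(t-1,t-1)$ over the divisor $L_t(1,2)$, then link that intermediate ideal to $SN_{t-1}$ over $SM_{t-1}+(f)$; this yields the same recursion $e(SN_t)=e(SN_{t-1})+(t-1)e(SM_{t-1})+e(L_t(1,2))$, which sums to the stated formula.

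Your extra caution at $t=2$ is in fact warranted and sharpens the paper's argument. The paper runs the induction from $t=1$ and applies the recursion verbatim for all $t\ge 2$, but the second biliaison step is not an instance of Theorem~\ref{step} when $t=2$ (one would need $t_1'=0$), and indeed the recursion is false there: your intermediate ideal $I'$ is generated by eight indeterminates so $e(I')=1$, whereas $e(SN_1)+1\cdot e(SM_1)=2$. Starting the recursion at $t=3$ with the directly verified value $e(SN_2)=1+e(L_2(1,2))$, as you do, is exactly what is needed to land on the stated formula; if one naively summed the recursion from $t=2$ one would overshoot by~$1$. So your bookkeeping is not just cosmetic---it closes a small gap.
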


\begin{proof}
We proceed by induction on $t$. If $t=1$, then $SN_1$ is generated by
indeterminates and $e(SN_1)=1$.

Let $\Y$ denote the ladder with upper corners $(1,2t-1)$ and
$(2,2t+1)$. Then $I_{2(t-1)}(Y)$ is the ideal generated by the
$2(t-1)$-pfaffians of $\Y$. By Theorem~\ref{step}, $SN_t$ is
obtained from $I_{2(t-1)}(Y)$ via an elementary G-biliaison of
height $1$ on $L_t(1,2)$. In turn, $I_{2(t-1)}(Y)$ is obtained from
$SN_{t-1}$ via an elementary G-biliaison of height $1$ on
$SM_{t-1}+(f)$, where $f$ is a $2(t-1)$-pfaffian which does not belong
to $SM_{t-1}$. Therefore, by Proposition~\ref{liaisonMultiplicity} 
$$e(SN_t)=e(L_t(1,2))+(t-1)e(SM_{t-1})+e(SN_{t-1})$$ 
and the thesis follows by induction hypothesis.
\end{proof} 

\begin{remark}
From the proof of Proposition~\ref{snt} it also follows that 
$$e(I_{2(t-1)}(Y))=e(SN_t)-e(L_t(1,2))=\sum_{s=2}^{t-1}[e(L_s(1,2)+se(M_s)]+1.$$
\end{remark}

\section{Castelnuovo-Mumford regularity}

In this section we use biliaison to compute the Castelnuovo-Mumford
regularity of some of the ideals considered in the
previous section. For an ideal $I$ of $R=K[X]$, we denote by
$\beta_{i,j}(I) $ the $(i,j)-$th graded Betti number of $I$, regarded
as an $R$-module.  
The Castelnuovo-Mumford regularity of a Cohen-Macaulay ideal $I$ of
height $h=\hgt(I)$ is
$$\reg(I)=\max\{ j\mid\beta_{h-1,j}(I)\neq 0\}-h+1.$$
It is well known that $\reg(M_t)=2t-1$.

The following result allows us to recursively compute the
Castelnuovo Mumford regularities of ideals obtained one from the other
by biliaison.

\begin{theorem}\label{liaisonRegularity}
Let $H,I,J\subset R$ be homogeneous, Cohen-Macaulay ideals. Assume that
$I$ is obtained from $J$ via an elementary G-biliaison of height
$\ell\in\ZZ$ on $H$. If $\reg(J)<\reg(H)$,
then $$\reg(I)=\reg(H)+\ell-1.$$ 
\end{theorem}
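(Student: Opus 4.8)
The plan is to realize the biliaison as two short exact sequences over the Cohen--Macaulay ring $A=R/H$ and to compare their local cohomology. Write $\ov I=I/H$ and $\ov J=J/H$, viewed as ideals of $A$. Unwinding Definition~\ref{gbil}, the fact that $I$ is obtained from $J$ by an elementary G-biliaison of height $\ell$ on $H$ means that there are forms $f,g$, nonzerodivisors modulo $H$, with $\deg g=\deg f+\ell$ and $\ov g\,\ov J=\ov f\,\ov I$ as ideals of $A$. Since $\ov f,\ov g$ are nonzerodivisors, multiplication identifies $\ov f\,\ov I\cong\ov I(-\deg f)$ and $\ov g\,\ov J\cong\ov J(-\deg g)$, so the equality $\ov f\,\ov I=\ov g\,\ov J$ yields $\ov I\cong\ov J(-\ell)$ as graded $A$-modules. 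I would then record the two canonical presentations
$$0\to\ov J\to A\to R/J\to 0,\qquad 0\to\ov J(-\ell)\to A\to R/I\to 0,$$
where the second uses the identification $\ov I\cong\ov J(-\ell)$.

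Next I would check that $\ov I$ and $\ov J$ are maximal Cohen--Macaulay $A$-modules. Set $c+1:=\dim R-\hgt H$, so that $A$ is Cohen--Macaulay of dimension $c+1$, while $R/I$ and $R/J$ are Cohen--Macaulay of dimension $c$ because $\hgt I=\hgt J=\hgt H+1$. The depth lemma applied to the first sequence gives $\depth\ov J\geq\min\{\depth A,\depth(R/J)+1\}=c+1=\dim\ov J$, so $\ov J$ (and likewise $\ov I$) is maximal Cohen--Macaulay. Consequently $A,\ov I,\ov J$ have a single nonvanishing local cohomology module, in cohomological degree $c+1$, and $R/I,R/J$ only in degree $c$. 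For a Cohen--Macaulay graded module $M$ one has $\reg M=a(M)+\dim M$, where $a(M)$ denotes the top nonvanishing degree of $H^{\dim M}_{\mm}(M)$; this reduces the whole statement to a computation of $a$-invariants.

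Finally I would extract the $a$-invariants from the long exact sequences in local cohomology. The first presentation yields the short exact sequence $0\to H^{c}_{\mm}(R/J)\to H^{c+1}_{\mm}(\ov J)\to H^{c+1}_{\mm}(A)\to 0$, and since the top degree of a middle term of a short exact sequence is the maximum of the outer ones, $a(\ov J)=\max\{a(R/J),a(A)\}$. This is where the hypothesis enters: $\reg(J)<\reg(H)$ rewrites as $a(R/J)+c<a(A)+(c+1)$, i.e.\ $a(R/J)\leq a(A)$, whence $a(\ov J)=a(A)$ and therefore $a(\ov I)=a(\ov J)+\ell=a(A)+\ell$. Feeding this into the second presentation gives $0\to H^{c}_{\mm}(R/I)\to H^{c+1}_{\mm}(\ov I)\to H^{c+1}_{\mm}(A)\to 0$, and comparing top degrees $a(\ov I)=a(A)+\ell>a(A)$ shows that in every degree exceeding $a(A)$ the kernel $H^{c}_{\mm}(R/I)$ coincides with $H^{c+1}_{\mm}(\ov I)$, so $a(R/I)=a(A)+\ell$. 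Converting back, $\reg(R/I)=a(A)+\ell+c$ and $\reg(R/H)=a(A)+c+1$, hence $\reg(I)=\reg(R/I)+1=\reg(R/H)+\ell=\reg(H)+\ell-1$.

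The main obstacle is the very last comparison: guaranteeing that the top graded piece of $H^{c+1}_{\mm}(\ov I)$ genuinely survives into $H^{c}_{\mm}(R/I)$ rather than being absorbed by the surjection onto $H^{c+1}_{\mm}(A)$. This is precisely where both hypotheses are used in tandem---$\reg(J)<\reg(H)$ to pin $a(\ov J)=a(A)$ (so the $R/J$ contribution does not raise the top degree), and the shift $\ell$ (positive in all the applications of this paper) to produce the strict inequality $a(\ov I)=a(A)+\ell>a(A)$ that forces the top of $H^{c+1}_{\mm}(\ov I)$ into the kernel. Everything else is bookkeeping with the exactness of the two sequences and the Cohen--Macaulayness established in the middle step.
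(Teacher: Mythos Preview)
Your proof is correct and takes a genuinely different route from the paper's. The paper works entirely over $R$ with free resolutions: it applies the mapping cone to the two short exact sequences
\[
0\lra H[-t]\lra H\oplus J[-t]\lra gJ+H\lra 0,\qquad
0\lra H[-t+\ell]\lra H\oplus I[-t+\ell]\lra fI+H\lra 0,
\]
first computing $\reg(gJ+H)=\reg(H)+t-1$ from the former (the hypothesis $\reg(J)<\reg(H)$ is used to rule out a cancellation at the top of the cone), and then reading off $\reg(I)$ by comparing the two resolutions of the common ideal $fI+H=gJ+H$. You instead pass to $A=R/H$, use the module isomorphism $\ov I\cong\ov J(-\ell)$ directly, and track $a$-invariants through the local cohomology of the two sequences $0\to\ov J\to A\to R/J\to 0$ and $0\to\ov I\to A\to R/I\to 0$. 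Your argument is more streamlined for the regularity alone and makes the role of the isomorphism $\ov I\cong\ov J(-\ell)$ transparent; the paper's resolution-based approach carries finer Betti-number information, which is precisely what the authors exploit in Section~4. Both proofs tacitly require $\ell>0$ at the final step: the paper's strict inequality $\reg(H)+t+h-2>\reg(H)+h-2+t-\ell$ is literally the condition $\ell>0$, just as your $a(\ov I)=a(A)+\ell>a(A)$ is, and you are right to flag it (your own analysis in fact shows the hypotheses force $\ell\ge 0$, and the case $\ell=0$ is genuinely indeterminate).
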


\begin{proof}
Since $I$ is obtained from $J$ via an elementary G-biliaison of height
$\ell\in\ZZ$ on $H$, there are homogeneous polynomials $f,g$ with
$\deg(f)+\ell=\deg(g)=:t$ such that $$fI+H=gJ+H\subset R.$$ Let
$h=\hgt I=\hgt J=\hgt H+1$. Applying
the Mapping Cone construction to the short exact sequence
$$0\lra H[-t]\lra H\oplus J[-t] \lra gJ+H\lra 0$$ we have that 
$$\reg(gJ+H)=\max\{j\mid \beta_{h-1,j}(gJ+H)\neq
0\}-h+1=$$ $$\max\{\reg(H)+h-2,\reg(J)+h-1\}+t-h+1=\reg(H)+t-1.$$ 
The last equality follows from the assumption that
$\reg(J)<\reg(H)$. The previous equality follows from the observation
that, since $J$ and $H$ are Cohen-Macaulay ideals,
$$\max\{j\mid \beta_{h-2,j}(H)\neq 0\}=\reg(H)+h-2\geq$$ 
$$\reg(J)+h-1>\max\{j\mid \beta_{h-2,j}(J)\neq 0\}$$
therefore no cancellation involving a direct summand $R[-reg(H)+h-2]$
can take place in the free resolution of $gJ+H$. 

In an analogous fashion, we can produce a free resolution for
$gJ+H=fI+H$ by applying the Mapping Cone construction to the short
exact sequence 
$$0\lra H[-t+\ell]\lra H\oplus I[-t+\ell] \lra fI+H\lra 0.$$ 
Since
$$\max\{j\mid \beta_{h-1,j}(fI+H)\neq 0\}=\reg(H)+t+h-2>$$
$$\reg(H)+h-2+t-\ell=\max\{j\mid \beta_{h-2,j}(H[-t+\ell])\neq 0\},$$
it must be 
$$\reg(H)+t+h-2=\max\{j\mid \beta_{h-1,j}(I[-t+\ell])\neq 0\}=
\reg(I)+h-1+t-\ell,$$
hence $$\reg(I)=\reg(H)+\ell-1.$$
\end{proof}

We now derive formulas for the Castelnuovo-Mumford regularity of some
pfaffian ideals of ladders. They are all easy consequences of
Theorem~\ref{liaisonRegularity}. 

\begin{proposition}\label{lt2_reg} 
For $t\geq 1$ we have
$$\reg(L_t(2))=3t-2$$ and for $t\geq 2$
$$\reg(N_t)=3t-4.$$
\end{proposition}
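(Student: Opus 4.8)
The plan is to prove both equalities simultaneously by induction on $t$, feeding Theorem~\ref{liaisonRegularity} with the two G-biliaison descriptions already recorded in the proof of Proposition~\ref{f_2(t)}. Recall from there that $L_t(2)$ is obtained from $N_t$ by an elementary G-biliaison of height $1$ on $M_t+(f)$, where $f$ is a $2t$-pfaffian (so $\deg f=t$) which is regular modulo $M_t$, and that $N_t$ is obtained from $L_{t-1}(2)$ by an elementary G-biliaison of height $1$ on $M_{t-1}+(g)$, where $g$ is a $2t$-pfaffian ($\deg g=t$) regular modulo $M_{t-1}$. All the ideals involved are Cohen--Macaulay: $L_t(2)$, $N_t$ and $L_{t-1}(2)$ are pfaffian ideals of ladders, and each biliaison locus is obtained from the Gorenstein ideal $M_t$ (resp.\ $M_{t-1}$) by adjoining a nonzerodivisor, hence its quotient remains Cohen--Macaulay. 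So the hypotheses of Theorem~\ref{liaisonRegularity} are in place, apart from the strict inequality $\reg(J)<\reg(H)$, which the induction will supply.

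The key preliminary step is to compute $\reg(M_t+(f))$ and $\reg(M_{t-1}+(g))$. Here I would use the standard fact that adjoining a homogeneous nonzerodivisor $f$ of degree $d$ to a Cohen--Macaulay ideal $M$ raises the regularity by $d-1$, that is $\reg(M+(f))=\reg(M)+d-1$. This follows from the mapping cone that resolves $R/(M+(f))$ as the cone of multiplication by $f$ on a minimal free resolution of $R/M$: the extra strand shifts internal degrees by $d$ and raises homological degree by $1$, so it contributes exactly $\reg(R/M)+d-1$ to the regularity of the quotient, whence the claim after passing from quotients to ideals. Combined with $\reg(M_t)=2t-1$ (recalled just before the statement), this gives $\reg(M_t+(f))=(2t-1)+t-1=3t-2$ and $\reg(M_{t-1}+(g))=(2(t-1)-1)+t-1=3t-4$.

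With these two values in hand the induction closes quickly. For the base case $t=1$ the ideal $L_1(2)$ is generated by the indeterminates lying in its ladder, so $\reg(L_1(2))=1=3\cdot 1-2$. For the inductive step, let $t\geq 2$ and assume $\reg(L_{t-1}(2))=3(t-1)-2=3t-5$. Applying Theorem~\ref{liaisonRegularity} to the biliaison of $N_t$ from $L_{t-1}(2)$ on $M_{t-1}+(g)$, whose hypothesis $\reg(L_{t-1}(2))=3t-5<3t-4=\reg(M_{t-1}+(g))$ holds, gives $\reg(N_t)=\reg(M_{t-1}+(g))+1-1=3t-4$; this establishes the formula for $N_t$ for every $t\geq 2$. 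Applying Theorem~\ref{liaisonRegularity} once more to the biliaison of $L_t(2)$ from $N_t$ on $M_t+(f)$, whose hypothesis $\reg(N_t)=3t-4<3t-2=\reg(M_t+(f))$ now holds, gives $\reg(L_t(2))=\reg(M_t+(f))+1-1=3t-2$, completing the induction.

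The main obstacle is the regularity computation of the biliaison loci, which requires correctly identifying $M_t+(f)$ and $M_{t-1}+(g)$ (the height-$3$ Gorenstein pfaffian ideals with one extra regular form of degree $t$ adjoined), invoking $\reg(M_t)=2t-1$, and controlling the effect of adjoining a regular element on regularity. Once this is done, the only remaining delicacy is the verification, at each biliaison, of the strict inequality required by Theorem~\ref{liaisonRegularity}; this is precisely what the interlocking induction on $L_t(2)$ and $N_t$ guarantees, through the chain $3t-5<3t-4<3t-2$.
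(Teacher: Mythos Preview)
Your proof is correct and follows essentially the same approach as the paper: an interlocking induction on $t$ that alternates the two G-biliaison steps from Proposition~\ref{f_2(t)} and invokes Theorem~\ref{liaisonRegularity} at each stage, using $\reg(M_t)=2t-1$ to compute the regularities of the biliaison loci. Your write-up is in fact slightly more explicit than the paper's in justifying $\reg(M+(f))=\reg(M)+\deg f-1$ via the mapping cone, and your induction is organized a touch more cleanly (carrying only $\reg(L_{t-1}(2))$ as hypothesis rather than two values), but the substance is identical.
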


\begin{proof}
We compute the regularity of $L_{t-1}(2)$ and $N_t$ for $t\geq 2$. We
proceed by induction on $t\geq 2$.
If $t=2$, $L_1(2)$ is generated by indeterminates, hence
$\reg(L_1(2))=1.$ By Theorem \ref{step}, $N_2$ is obtained from
$L_1(2)$ via an ascending G-biliaison of height $1$ on $M_1+(p)$,
where $p$ is a $4$-pfaffian which is regular modulo
$M_1$. Since $\reg(L_1(2))=1<2=\reg(H)$, by
Theorem~\ref{liaisonRegularity} we have $$\reg(N_2)=2.$$

We now assume by induction hypothesis that $\reg(L_{t-2}(2))=3t-8$ and
$\reg(N_{t-1})=3t-7$, and compute the regularity of $L_{t-1}(2)$ and
$N_t$. By Theorem \ref{step}, the ideal $L_{t-1}(2)$ is obtained
from $N_{t-1}$ via an elementary G-biliaison of height $1$ on
$M_{t-1}+(f)$, where $f$ is a $2(t-1)$-pfaffian which is regular modulo
$M_{t-1}$. Since $\reg(N_{t-1})=3t-7<3t-5=\reg(M_{t-1}+(f))$, by
Theorem~\ref{liaisonRegularity} $$\reg(L_{t-1}(2))=3t-5.$$
By Theorem \ref{step}, the ideal $N_t$ is obtained from
$L_{t-1}(2)$ via an elementary G-biliaison of height $1$ on
$M_{t-1}+(g)$, where $g$ is a $2t$-pfaffian which is 
regular modulo $M_{t-1}$. Since
$reg(L_{t-1}(2))=3t-5<3t-4=\reg(M_{t-1}+g)$,
by Theorem~\ref{liaisonRegularity} we have $$\reg(N_t)=3t-4.$$ 
\end{proof}

\begin{proposition}
For $t\geq 1$ we have
$$\reg(SM_t)=3t-2.$$
\end{proposition}

\begin{proof}
We proceed by induction on $t\geq 1$. If $t=1$, $SM_1$ is generated by
indeterminates, hence $\reg(SM_1)=1.$ 
By Theorem \ref{step} the ideal $SM_t$ is obtained from
$SM_{t-1}$ via an elementary G-biliaison of height $1$ on $L_t(2)$. By
induction hypothesis and Proposition~\ref{lt2_reg} 
$$\reg(SM_{t-1})=3t-5<3t-2=\reg{L_t(2)}.$$ Therefore, by 
Theorem~\ref{liaisonRegularity} $$\reg(SM_t)=\reg(L_t(2))=3t-2.$$
\end{proof}

\section{The Gorenstein height $3$ case}

The ideal $M_t$ generated by the $2t$-pfaffians of a generic
skew-symmetric matrix of size $2t+1$ is a Gorenstein ideal of 
height $3$. A classical result due to Buchsbaum and Eisenbud
\cite{BE} states that any Gorenstein ideal of height $3$ is obtained
by specialization from $M_t$, for some $t$. 
An alternative proof for many classically known results on Gorenstein
ideals of height $3$ can therefore be given by combining
specialization with a liaison approach analogous to what we have done
in the previous sections. 
 
In this section we wish to give a taste of what can be obtained
following such an approach. In particular, we use G-biliaison to
compute the graded Betti numbers of the ideal $M_t$ and to prove that
its $h$-vector is of decreasing type. We start by recalling some
definitions and fixing the notation.

Let $I$ be a homogeneous ideal of $R=K[X]$. The {\em Hilbert function}
of $R/I$ is defined as 
$$\HF_I(m)=\dim_K(R/I)_m$$ for every integer $m$. Clearly $\HF_I(m)=0$
for $m<0$. 
The formal power series $$\HS_I(z)=\sum_{m\in\ZZ}\HF_I(m)z^m$$  is
called the {\em Hilbert series} of $R/I$. 
It is well known that the Hilbert series of $R/I$ is of the form 
$$\HS_I(z)=\frac {h_I(0)+h_I(1)z+\ldots+h_I(s) z^s}{(1-z)^d},$$ where
$d=\dim (R/I)$ and $h_i\in\ZZ$ for every $i$. The
vector $$h_I=(h_I(0),\dots,h_I(s))\in \ZZ^s$$ is called {\em
  $h$-vector} of $I$. 
Moreover we denote by $\Delta H_I$ the {\em first difference} of $H_I$, that is
$$\Delta H_I(m)= H_I(m)-H_I(m-1).$$ 
\begin{definition}
Let $h=(h_0,h_1,\dots,h_s)\in {\bf Z}^s$
\item{a)}  $h$ is {\em unimodal} if there exists $t\in\{1,\dots s\}$ such that
 $h_1\le h_2 \le \dots \le h_t\ge h_{t+1}\ge \dots \ge h_s$.
 \item{b)}  $h$ is of {\em decreasing type} if whenever $h_t>
   h_{t+1},$ then $h_j>h_{j+1}$ for every $j>t$. 
\end{definition}
Notice that every $h$-vector of decreasing type is unimodal.

\begin{proposition}
The $h$-vector of $M_t$ is of decreasing type.
\end{proposition}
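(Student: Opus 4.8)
The plan is to exploit the two structural facts about $M_t$ already recorded above: that $M_t$ is a Gorenstein ideal of height $3$, and that $\reg(M_t)=2t-1$. Gorensteinness forces the $h$-vector $h_{M_t}=(h_{M_t}(0),\dots,h_{M_t}(s))$ to be symmetric, $h_{M_t}(i)=h_{M_t}(s-i)$, while $\reg(M_t)=2t-1$ pins down the socle degree of $R/M_t$, namely $s=2t-2$. Since $s$ is even there is a single central entry $h_{M_t}(t-1)$, and I would first reduce the statement to a monotonicity claim: \emph{it suffices to show that the first half $h_{M_t}(0)<h_{M_t}(1)<\dots<h_{M_t}(t-1)$ is strictly increasing}. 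Indeed, if this holds then by symmetry the second half strictly decreases, the first strict drop occurs exactly at the center $t-1\to t$, and every subsequent step is again strict; this is precisely the decreasing-type condition. (Note that strictness really is needed: a plateau in the first half would mirror to a plateau in the strictly decreasing second half and break decreasing type.)

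The monotonicity of the first half I would obtain from a Hilbert-series recursion coming from biliaison. By Theorem~\ref{step}, applied by removing the single upper corner $(1,2t+1)$ of the full square ladder defining $M_t=I_{2t}(X)$, the ideal $M_t$ is obtained from $M_{t-1}$ by an elementary G-biliaison of height $1$ on $L_t^{2t+1}$. Feeding this datum into the two short exact sequences of the Mapping Cone construction used in the proof of Theorem~\ref{liaisonRegularity}, but now reading off Hilbert series rather than top Betti numbers, and using that the height drops by one on $L_t^{2t+1}$, I would derive $$h_{M_t}(z)=h_{L_t^{2t+1}}(z)+z\,h_{M_{t-1}}(z),$$ whose value at $z=1$ recovers $e(M_t)=e(M_{t-1})+e(L_t^{2t+1})$ in agreement with Proposition~\ref{liaisonMultiplicity}. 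Comparing coefficients gives $h_{M_t}(i)=h_{L_t^{2t+1}}(i)+h_{M_{t-1}}(i-1)$, hence $h_{M_t}(i)-h_{M_t}(i-1)=\bigl(h_{L_t^{2t+1}}(i)-h_{L_t^{2t+1}}(i-1)\bigr)+\bigl(h_{M_{t-1}}(i-1)-h_{M_{t-1}}(i-2)\bigr)$.

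It then remains to control $h_{L_t^{2t+1}}$, and this is where I expect the real work to lie. Computing it by the methods of Section~2 (for instance by running the biliaison relating $I_t^{2t+2}$ and $I_{t-1}^{2t}$ on $L_t^{2t+1}$ one further step) yields $h_{L_t^{2t+1}}(z)=(1+z+\dots+z^{t-1})^2$, whose first-half coefficients are $1,2,\dots,t$ and in particular satisfy $h_{L_t^{2t+1}}(i)-h_{L_t^{2t+1}}(i-1)\ge 1$ for $1\le i\le t-1$. Combining this with the inductive hypothesis $h_{M_{t-1}}(i-1)-h_{M_{t-1}}(i-2)\ge 0$ on the relevant range (base case $M_1$, with $h$-vector $(1)$), the coefficient identity above forces $h_{M_t}(i)-h_{M_t}(i-1)>0$ throughout the first half, closing the induction. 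Solving the recursion explicitly even gives the closed form $h_{M_t}(i)=\binom{i+2}{2}$ for $0\le i\le t-1$, which is visibly strictly increasing, so the $h$-vector of $M_t$ is of decreasing type. The main obstacle is thus not the liaison bookkeeping, which is formal, but the determination of the auxiliary $h$-vector $h_{L_t^{2t+1}}$; the whole argument hinges on knowing that this vector is (strictly) increasing on its first half, so I would invest the effort there.
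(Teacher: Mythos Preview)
Your argument is correct and rests on the same core identity as the paper: both derive the biliaison recursion $h_{M_t}(m)=h_{M_{t-1}}(m-1)+h_{L_t^{2t+1}}(m)$ from Theorem~\ref{step}. The paper simply unwinds this recursion to write $h_{M_t}$ as a sum of shifted complete-intersection $h$-vectors $h_{(j,j)}$, all of which attain their maximum at the same index, and observes that a sum of decreasing-type vectors peaking together is again of decreasing type. You instead invoke the Gorenstein symmetry of $h_{M_t}$ to reduce to strict monotonicity on the first half and close that by induction; this is a legitimate variant and even yields the closed form $h_{M_t}(i)=\binom{i+2}{2}$, but it is slightly more roundabout than the paper's direct decomposition.

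One point to streamline: what you flag as ``the main obstacle''---computing $h_{L_t^{2t+1}}$---is in fact immediate. The ladder obtained from the $(2t{+}1)\times(2t{+}1)$ matrix by deleting the corner supports exactly two $2t$-pfaffians, which form a regular sequence; so $L_t^{2t+1}$ is a complete intersection of type $(t,t)$ and $h_{L_t^{2t+1}}(z)=(1+z+\cdots+z^{t-1})^2$ with no further work. There is no need to run any additional biliaison to obtain this.
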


\begin{proof} 
Let $X$ be a $(2t+1)\times (2t+1)$ skew-symmetric matrix of
indeterminates and let $R=K[X]$ be the corresponding polynomial
ring. Denote by $h_{(t,t)}(m)$ the $m$-th entry of the $h$-vector
of a complete intersection generated by two forms of degree $t$. 

We follow the notation of Section~1 and consider the ideals $M_t$,
$M_{t-1}$ and $L_t^{2t+1}=:I$. It is
clear that $I$ is generated by two $2t$-pfaffians which form a
complete intersection. 
By Theorem~\ref{step}, $M_{t-1}$ is obtained from $M_t$ via an
elementary G-biliaison of height $1$ on $I$. In other words, there are
homogeneous polynomials $f,g$ of degree $t-1$ and $t$ respectively,
such that $$fM_t+I=gM_{t-1}+I\subset R.$$ By the additivity of the
Hilbert function on the two short exact sequences 
$$0\lra I[-t+1]\lra I\oplus M_t[-t+1] \lra fM_t+I\lra 0$$ 
$$0\lra I[-t]\lra I\oplus M_{t-1}[-t] \lra gM_{t-1}+I \lra 0$$ 
one obtains that $H_{M_t}(d-t+1)-H_I(d-t+1)=H_{M_{t-1}}(d-t)-H_I(d-t)$
for any $d\in\ZZ$. By setting $m=d-t+1$, we get
$$H_{M_t}(m)=H_{M_{t-1}}(m-1)+\Delta H_I(m).$$ Since $\dim R/I-1=\dim
R/M_{t-1}=\dim R/M_t$, one has
$$h_{M_t}(m)=h_{M_{t-1}}(m-1)+h_{(t,t)}(m).$$ Solving the recursion
one obtains  
$$h_{M_t}(m)=h_{M_1}(m-t+1)+\sum_{j=2}^{t}h_{(j,j)}(m-t+j).$$
This proves that the $h$-vector of $M_t$ is obtained  by summing the
$h$-vectors of suitable complete intersections. Notice that the
$h$-vectors involved in the summation are shifted in such a way,
that the maximum is always attained at the same point.
Therefore, their sum $h_{M_t}$ is of decreasing type.
\end{proof}

We can easily compute the graded Betti numbers of $M_t$ as follows.

\begin{proposition}
A minimal free resolution of $M_t$ has the form
$$0\lra R[-2t-1]\lra R[-t-1]^{2t+1}\lra R[-t]^{2t+1}\lra M_t\lra 0.$$
\end{proposition}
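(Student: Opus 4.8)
The plan is to induct on $t$, in parallel with the $h$-vector recursion established just above. The base case $t=1$ is immediate: $M_1=(x_{12},x_{13},x_{23})$ is the ideal of the three entries above the diagonal of a $3\times3$ skew-symmetric matrix, so its minimal free resolution is the Koszul complex
$$0\lra R[-3]\lra R[-2]^{3}\lra R[-1]^{3}\lra M_1\lra 0,$$
which is exactly the asserted shape for $t=1$.

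For the inductive step I would reuse the G-biliaison already exploited in the previous proof. Writing $I:=L_t^{2t+1}$, a complete intersection of two $2t$-pfaffians with Koszul resolution $0\lra R[-2t]\lra R[-t]^2\lra I\lra 0$, Theorem~\ref{step} supplies $f,g$ with $\deg f=t-1$, $\deg g=t$ and $fM_t+I=gM_{t-1}+I=:W$, together with the two short exact sequences
$$0\lra I[-t]\lra I\oplus M_{t-1}[-t]\lra W\lra 0,$$
$$0\lra I[-t+1]\lra I\oplus M_t[-t+1]\lra W\lra 0.$$
First I would resolve $W$ by a Mapping Cone on the first sequence, feeding in the Koszul resolution of $I$ and the inductively known resolution of $M_{t-1}$. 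Then, applying the Horseshoe Lemma to the second sequence with the resolutions of $I[-t+1]$ and of $W$, I obtain a resolution of $I\oplus M_t[-t+1]$; cancelling the (known, minimal) Koszul resolution of $I$ isolates a resolution of $M_t[-t+1]$ with free modules $R[-2t+1]^{2t+1}$, $R[-2t]^{2t+1}$, $R[-3t]$, possibly together with one extra pair $R[-3t+1]$. The twist $[t-1]$ then turns this into the claimed resolution of $M_t$, up to that extra pair.

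The main obstacle is precisely this extra pair. It appears because here $\reg(I)=\reg(M_t)=2t-1$, the borderline case in which Theorem~\ref{liaisonRegularity} does not apply, so the Mapping Cone is \emph{a priori} non-minimal and one must show the spurious summand cancels. I would remove it using the structural facts already available: $M_t$ is Cohen-Macaulay of height $3$ and Gorenstein (\cite{BE}), so its resolution has length $2$ and terminates in a free module of rank $1$, which forces the extra summand in the top spot to disappear, while $\reg(M_t)=2t-1$ fixes the surviving top twist as $R[-2t-1]$ via the regularity formula recalled at the start of this section. In fact, granting the Gorenstein structure there is a one-line shortcut that bypasses the Mapping Cone entirely: by Buchsbaum-Eisenbud self-duality the resolution of $M_t$ is isomorphic to its own dual twisted by $[-2t-1]$, and since $M_t$ is minimally generated by the $2t+1$ maximal pfaffians of degree $t$ one has $F_0=R[-t]^{2t+1}$, whence $F_1\cong\Hom(F_0,R)[-2t-1]=R[-t-1]^{2t+1}$ and $F_2=R[-2t-1]$. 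I expect the only genuinely non-formal points to be checking that the $2t+1$ maximal pfaffians are a minimal generating set and verifying the single cancellation.
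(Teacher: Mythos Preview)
Your approach is essentially the paper's: both induct on $t$ via the G-biliaison on the complete intersection $I=L_t^{2t+1}$, resolve $W=fM_t+I=gM_{t-1}+I$ by a mapping cone from the inductively known resolution of $M_{t-1}$, and then extract the Betti numbers of $M_t$ from the second short exact sequence. Where you run the Horseshoe Lemma on $0\to I[-t+1]\to I\oplus M_t[-t+1]\to W\to 0$ and subtract the Koszul resolution of $I$, the paper instead applies a second mapping cone to the same sequence---writing a resolution of $W$ in terms of the unknown $\FF_\bullet$---and compares it with the first, minimal, resolution of $W$; these are equivalent manoeuvres and give the same numerical constraints on $\FF_\bullet$. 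For the residual ambiguity (your ``extra pair'' $R[-2t]$) the paper argues purely formally: since $\FF_\bullet[-t+1]$ sits inside the second mapping cone as a minimal subcomplex, no cancellation can occur among the $\FF_i[-t+1]$, and this is used to force the containments $\FF_i\supseteq(\cdots)$ to be equalities---so the paper never invokes the Gorenstein property, whereas you do. Your Buchsbaum--Eisenbud shortcut is correct but bypasses the biliaison machinery the section is meant to illustrate.
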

 
\begin{proof}
We prove the statement by induction on $t\geq 1$. If $t=1$, the ideal
$M_1$ is generated by three distinct indeterminates, hence a minimal
free resolution has the form 
$$0\lra R[-3]\lra R[-2]^3\lra R[-1]^3\lra M_1\lra 0.$$
Assume now that $t\geq 2$ and consider the ideals $M_t$, $M_{t-1}$ and
$L_t^{2t+1}$. We denote $L_t^{2t+1}$ by $I$ for brevity. It is clear
that $I$ is generated by two $2t$-pfaffians which form a complete
intersection.  
By Theorem~\ref{step}, $M_{t-1}$ is obtained from $M_t$ via an
elementary G-biliaison of height $1$ on $I$. Moreover, there are
homogeneous polynomials $f,g$ of degree $t-1$ and $t$ respectively,
such that $$fM_t+I=gM_{t-1}+I\subset R.$$ By induction hypothesis
$M_{t-1}$ has a minimal free resolution of the form
$$0\lra R[-2t+1]\lra R[-t]^{2t-1}\lra R[-t+1]^{2t-1}\lra M_{t-1}\lra
0.$$
Let $$0\lra\FF_3\lra\FF_2\lra\FF_1\lra M_t\lra 0$$ be a minimal free
resolution of $M_t$.
Applying the Mapping Cone to the two short exact sequences 
$$0\lra I[-t+1]\lra I\oplus M_t[-t+1] \lra fM_t+I\lra 0$$ 
$$0\lra I[-t]\lra I\oplus M_{t-1}[-t] \lra gM_{t-1}+I \lra 0$$ 
one obtains free resolutions for the ideal $J=fM_t+I=gM_{t-1}+I$ of
the form 
$$\begin{array}{ccccc}
 & R[-3t+1] & & R[-t]^2 & \\
0\lra & \oplus & \lra R[-2t]^{2t+2}\lra & \oplus & \lra J\lra 0 \\
 & R[-3t] & & R[-2t+1]^{2t-1} &
\end{array}$$ and
$$\begin{array}{ccccccc}
 &  R[-3t+1] & &  R[-2t]\oplus R[-2t+1]^2 & & R[-t]^2 & \\
0\ra & \oplus & \ra & \oplus & \ra & \oplus & \ra J\ra 0.\\
 & \FF_3[-t+1] & & \FF_2[-t+1] & &  \FF_1[-t+1] &
\end{array}$$
The first free resolution must be minimal, hence 
\begin{equation}\label{mfr}\FF_3\supseteq
R[-2t-1], \FF_2\supseteq R[-2t]^{2t+1}, \mbox{ and } \FF_1\supseteq
R[-t]^{2t+1}.\end{equation} 
Since no cancellation is possible among $\FF_1[-t+1],\FF_2[-t+1]$
and $\FF_3[-t+1]$ in the second free resolution of $J$, we deduce that
all the containments in (\ref{mfr}) must be equalities.
\end{proof}


\begin{thebibliography}{11111111}

\bibitem [A]{A} L. Avramov. ``A class of factorial domains'', {\em
    Serdica}  {\bf 5} (1979), 378--379.
 
\bibitem[BE]{BE} D. Buchsbaum, D. Eisenbud. ``Algebra structures for
  finite free resolutions, and some structure theorems for ideals of
  codimension $3$'',  {\em Amer. J. Math.} {\bf 99} (1977), no. 3,
  447--485.  

\bibitem[BCG]{BCG}  N. Budur, M. Casanellas, E. Gorla.  ``Hilbert
  functions of irreducible arithmetically Gorenstein schemes'',  
J. Algebra {\bf 272} (2004), no. 1, 292--310.

\bibitem[C]{C} A. Conca, ``Gr\"obner bases of ideals of minors of a
  symmetric matrix'', J. Algebra {\bf 166} (1994), no. 2, 406--421. 

\bibitem[CN]{CN} A. Corso, U. Nagel. ``Monomial and toric ideals
  associated to Ferrers graphs'', {\em Trans. Amer. Math. Soc.} {\bf
    361} (2009), no. 3, 1371--1395.  

\bibitem [D]{D} E. De Negri. ``Pfaffian ideals of ladders'', 
J. Pure Appl. Alg.{\bf 125} (1998), 141--153.

\bibitem [D1]{D1} E. De Negri. ``Some results on Hilbert series and
  $a$-invariant of Pfaffian ideals'',  {\em Math. J. Toyama Univ.}
  {\bf 24}  (2001), 93--106. 

\bibitem [DGo]{DGo} E. De Negri, E. Gorla. ``G-Biliaison of ladder
  Pfaffian varieties'', J. Algebra 321 (2009), no. 9, 2637--2649. 

\bibitem [K]{K} C. Krattenthaler, ``The major counting of
  nonintersecting lattice paths and generating 
functions for tableaux'', {\em Mem. Amer. Math. Soc.} {\bf 115}
(1995).

\bibitem [HT]{HT} J. Herzog, N. V. Trung. ``Gr\"obner bases and
  multiplicity of determinantal and Pfaffian ideals'', {\em
    Adv. Math.} {\bf 96} (1992), 1--37. 

\bibitem[HTV]{HTV} J. Herzog, N. V. Trung, G. Valla. ``On hyperplane
  sections of reduced irreducible varieties of low codimension'', {\em
J. Math. Kyoto Univ.} {\bf 34} (1994), no. 1, 47--72. 

\bibitem[KL]{KL} H. Kleppe, D. Laksov. ``The algebraic structure and
  deformation of Pfaffian schemes'', {\em J. Algebra} {\bf 64} 
(1980), 167--189. 

\end{thebibliography}
\end{document}